\documentclass[11pt, a4paper]{article}

\usepackage{fullpage}
\usepackage[margin=2.5cm]{geometry}
\usepackage{setspace}
\usepackage[section]{placeins}
\usepackage{authblk}

\usepackage[round]{natbib}

\usepackage{xcolor}
\usepackage[colorlinks=true, allcolors= blue]{hyperref}
\hypersetup{
    bookmarksdepth=3 %
}

\usepackage{amsmath,amssymb,amsthm,mathtools,mathrsfs}
\usepackage[capitalize]{cleveref}

\theoremstyle{plain}
\newtheorem{theorem}{Theorem}
\newtheorem{conjecture}{Conjecture}
\newtheorem{lemma}{Lemma}
\newtheorem{corollary}{Corollary}

\theoremstyle{definition}

\newtheorem{example}{Example}

\newcommand{\Q}{\mathbb{Q}}
\newcommand{\R}{\mathbb{R}}
\newcommand{\N}{\mathbb{N}}
\newcommand{\E}{\mathbb{E}}
\newcommand{\PR}{\mathbb{P}}
\DeclareMathOperator{\val}{val}
\newcommand{\PP}{\R[z]_{<m}}
\newcommand{\PQ}{\Q[z]_{<m}}
\newcommand{\defas}{\coloneqq}
\newcommand{\I}{\mathcal{I}}
\newcommand{\J}{\mathcal{J}}
\newcommand{\C}{\mathcal{C}}

\long\def\ACKNOWLEDGMENT#1{\section*{Acknowledgments}#1}

\allowdisplaybreaks

\title{Values of Absorbing Recursive Games Express\\ All Real Algebraic Numbers}

\author[1]{Ali Asadi}
\author[1]{Krishnendu Chatterjee}

\affil[1]{Institute of Science and Technology Austria (ISTA), Klosterneuburg, Austria}

\date{\today}

\begin{document}
\maketitle

\begin{abstract}
    Many classes of two-player zero-sum stochastic games have the orderfield property: if all payoffs and transition probabilities lie in a subfield of $\R$, so does the undiscounted value.
Absorbing games fail this property, and Oliu-Barton and Vigeral [\emph{Absorbing games with irrational values}, Oper.\ Res.\ Lett.\ 51 (2023) 555--559] conjectured the precise extent of the failure:
every real algebraic number $\alpha$ of degree $m\geq 1$ over $\Q$ is the undiscounted value of a rational $m\times m$ absorbing game.
We prove this conjecture, and in fact within a special subclass of absorbing games: for every such $\alpha$, the game realizing it is \emph{strictly absorbing} and \emph{recursive}, i.e., every action pair is absorbing with positive probability and all non-absorbing stage payoffs are zero; when $\alpha>0$ it can moreover be taken \emph{positive recursive}, with positive absorbing payoffs. As a corollary, the set of undiscounted values of rational $m\times m$ absorbing games is exactly the set of real algebraic numbers of degree at most $m$.\\

    \noindent \textbf{Keywords}: Stochastic games $\cdot$ Absorbing games $\cdot$ Recursive games $\cdot$ Undiscounted value $\cdot$ Orderfield property $\cdot$ Algebraic numbers\\

    \noindent \textbf{MSC classification}: Primary: 91A15; secondary: 91A05, 11R04
\end{abstract}


\newpage

\section{Introduction}\label{sec:intro}

\citet{Sha53} introduced zero-sum stochastic games. Two players repeatedly play a zero-sum game whose data depend on a state variable, which evolves stochastically under the influence of both players' actions. For each discount factor $\lambda\in(0,1]$, the $\lambda$-discounted game has a value $v_\lambda$, and both players have optimal stationary strategies \citep{Sha53}. \citet{BK76} proved that $v_\lambda$ converges as $\lambda\to 0$; the limit $v\defas \lim_{\lambda\to 0}v_\lambda$ is the \emph{undiscounted value}, and by \citet{MN81} it coincides with the \emph{uniform value} of the game.

A finite zero-sum matrix game has its value in any subfield of $\R$ containing its payoffs; rational payoffs, in particular, yield a rational value. Whether this \emph{orderfield property} extends to stochastic games is a recurring theme in their algorithmic and algebraic theory: for many subclasses, if all payoffs and transition probabilities lie in a subfield $K\subseteq\R$, then so does the undiscounted value. For general stochastic games it fails, since with rational data the undiscounted value is algebraic but possibly irrational \citep{BK76}.

\emph{Absorbing games}, formalized by \citet{Koh74}, are stochastic games in which all states but one are absorbing; the celebrated Big Match \citep{Gil57,BF68} belongs to this class. Whether they retain the orderfield property was settled negatively only recently by \citet{OV23} (see also the earlier preprint \citealp{OB23arxiv}), who showed that there exist rational $3\times 3$ absorbing games, with deterministic transitions, whose undiscounted value is irrational; that every real quadratic algebraic number is the undiscounted value of a rational $2\times 2$ absorbing game; and, for every $m\geq 1$, that some rational $m\times m$ absorbing game has an undiscounted value of algebraic degree exactly $m$. This last statement shows that the upper bound of \citet{OB21}, namely that the undiscounted value of a rational $m\times m$ absorbing game is algebraic of degree at most $m$, is tight. These results led \citet{OV23} to the following conjecture.

\begin{conjecture}[{\citealp[Conjecture~1]{OV23}}]\label{conj:OV}
Any real algebraic number of degree $m\geq 1$ is the undiscounted value of a rational $m\times m$ absorbing game.
\end{conjecture}

The cases $m=1$ and $m=2$ \citep[Theorem~3]{OV23} were known.
We prove the conjecture in full, and moreover realize every value within a well-studied restricted subclass of absorbing games. 
Recall that an absorbing game is \emph{strictly absorbing} (also called \emph{halting}) if every action pair is absorbing with positive probability, i.e., under any pair of strategies play reaches an absorbing state with probability $1$~\citep{Sha53}. It is \emph{recursive} if every non-absorbing stage payoff is $0$, i.e., the payoff accrues only upon absorption, and \emph{positive recursive} if, in addition, every absorbing payoff is positive~\citep{Eve57}.
In contrast to general absorbing games where history-dependent strategies may be necessary, e.g., the classical Big Match \citep{Gil57,BF68}, halting games admit optimal stationary strategies \citep{Sha53}, while recursive games admit $\epsilon$-optimal stationary strategies \citep{Eve57}.
Positive recursive games, in particular, are closely related to the concurrent reachability and safety games studied in the verification literature~\citep{AHK98}.
Our main result is as follows.

\begin{theorem}\label{thm:main-intro}
Let $m\geq 1$ and let $\alpha\in\R$ be an algebraic number of degree $m$ over $\Q$. Then there exists a rational $m\times m$ halting and recursive absorbing game whose undiscounted value equals $\alpha$. If $\alpha>0$, then the game is moreover positive recursive.
\end{theorem}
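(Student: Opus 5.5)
The plan is to reduce to a one-shot parametrized matrix game and then design its entries from the minimal polynomial of $\alpha$. For a halting recursive absorbing game with one non-absorbing state, let action pair $(i,j)$ absorb with probability $p_{ij}>0$ and absorbing payoff $g_{ij}$; then $r_{ij}\defas p_{ij}g_{ij}$ is the expected absorbing reward per stage. The standard Shapley-operator argument for halting games \citep{Sha53} says the undiscounted value $v$ is the unique $w$ with $\val(R - wP)=0$, where $R=(r_{ij})$ and $P=(p_{ij})$. Indeed the stationary value $w$ satisfies $w=\val\big((r_{ij}+(1-p_{ij})w)\big)$, hence $\val(R-wP)=0$. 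Moreover $w\mapsto\val(R-wP)$ is strictly decreasing and continuous, because $P>0$ entrywise, so the root is unique. The task is therefore to choose rational $m\times m$ matrices $R$ and $P$ with $P>0$, $\max_{ij} p_{ij}\le 1$, and $\val(R-\alpha P)=0$. If $\alpha>0$, we also need $R>0$, which makes every $g_{ij}=r_{ij}/p_{ij}$ positive.

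For the choice of entries, let $f(z)=\sum_{k=0}^m c_kz^k$ be the minimal polynomial of $\alpha$ over $\Q$. I would look for a game in which, near $w=\alpha$, the value of $A(w)\defas R-wP$ is attained by a completely mixed equilibrium with full-rank kernel. In that regime $\val A(w)=\det A(w)/(\mathbf 1^\top \operatorname{adj}A(w)\mathbf 1)$. The degree of $\det(R-wP)$ in $w$ is at most $m$, so it suffices to have $\det(R-wP)=\kappa f(w)$ for some $\kappa\neq 0$. The sign change of $f$ at $\alpha$, together with positivity of the denominator near $\alpha$, then forces $\val A(w)$ to cross zero exactly at $w=\alpha$. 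A natural template is a companion-type structure: take $P$ close to a constant positive matrix and $R$ such that $R-wP$ is a small rational perturbation of a matrix whose determinant is $\prod_k(\beta_k-w)$ for rationals $\beta_k$. Concretely, I would use a diagonal-dominant $R-wP=\mathrm{diag}(\beta_k-w)+\varepsilon E(w)$ with $E$ affine in $w$. The rational coefficients of $E$ would then be tuned by linear algebra so that the determinant equals $\kappa f(w)$; this interpolates the $m+1$ coefficients. Alternatively, I would match $f$ at $m+1$ rational nodes. Shifting $w\mapsto w+t$ and scaling, which corresponds to adding constants to payoffs and rescaling probabilities, lets me assume $\alpha$ lies in a convenient interval and that all $p_{ij}\in(0,1]$. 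When $\alpha>0$, adding a large multiple of $P$ to $R$ would shift the value, so instead I will place $\alpha$ in the range where $R$ is already positive.

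The main obstacle is the last requirement: guaranteeing that at $w=\alpha$ the optimal strategies of $A(\alpha)$ are completely mixed, so that the determinant formula actually governs the value. This must hold, rather than a smaller square subgame determining the value and producing a root of a lower-degree factor. My first attempt is to keep $E$ small, so that $A(w)$ stays near a diagonal matrix with diagonal entries of one sign. For such a matrix the equilibrium is completely mixed with positive denominator, by the classical result that a matrix with a positive diagonal and small off-diagonal entries has a completely mixed value. I would choose the $\beta_k$ clustered around $\alpha$ and the perturbation so that this property holds on a neighborhood of $\alpha$. The remaining check is that no spurious root of $\val A(w)$ exists elsewhere, and this is automatic from the strict monotonicity established in the first step.
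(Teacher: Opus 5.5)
Your first step is sound: it is the paper's \cref{lem:uniformlimit}, reached through Shapley's stopping-game equation rather than through the limit of $v_\lambda$, which is legitimate here because $g=0$ and absorption is almost sure.

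\textbf{The gap.} It lies in the second step, which carries the entire difficulty. You never exhibit rational $R,P$ with $P>0$ such that $R-\alpha P$ has nonnegative left and right null vectors, and the near-diagonal mechanism you propose for this defeats itself.

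If $A(\alpha)=D+F$ with $D=\mathrm{diag}(\beta_k-\alpha)$ of one sign and $F=\varepsilon E(\alpha)$ small relative to $D$, then $A(\alpha)$ is strictly diagonally dominant. It is therefore nonsingular, contradicting $\det A(\alpha)=\kappa f(\alpha)=0$. Likewise $\val D=\pm\bigl(\sum_k|\beta_k-\alpha|^{-1}\bigr)^{-1}\neq 0$, so \cref{lem:matrixgames}(iii) gives $\val A(\alpha)\neq 0$. The completely-mixed regime you invoke is exactly the regime in which the value stays away from $0$.

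Smallness also clashes with the determinant condition. If $\varepsilon E(w)$ is small, the roots of $\det\bigl(\mathrm{diag}(\beta_k-w)+\varepsilon E(w)\bigr)$ lie close to the rationals $\beta_k$, and they stay real when the $\beta_k$ are distinct. But $\kappa f$ vanishes at all conjugates of $\alpha$, which are not clustered at $\alpha$ and may be non-real (e.g.\ $f=z^3-3$).

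Once the perturbation is not small, nothing in your argument controls the signs of the null vectors. Already for $m=2$, a singular $\begin{pmatrix}a&b\\c&d\end{pmatrix}$ with $a,d>0$ and positive null vectors forces $b,c<0$ with $bc=ad$, so the off-diagonal entries are comparable to the diagonal ones. This is precisely the root-selection obstruction that blocks determinant-based approaches. Your treatment of $R>0$ when $\alpha>0$ is also left unjustified.

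\textbf{The missing idea.} Build the positive null vectors into the construction instead of hoping to verify them afterwards. The paper takes
\[
R-wP=M(\xi)\bigl(M(z)-wI\bigr),
\]
where $M(f)$ is the matrix of $u\mapsto\rho(fu)$ on $\PP$ in a rational basis $(p_k)$. This basis is chosen by an openness-plus-density argument (\cref{lem:basis}) so that $\tilde x=(p_k(\alpha))_k>0$ and the Lagrange polynomial $\ell=\mu_1/\mu_1(\alpha)$ has positive coordinates $\tilde y$. Then $\tilde x^{\top}M(f)=f(\alpha)\tilde x^{\top}$ and $M(f)\tilde y=f(\alpha)\tilde y$ for every $f$. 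Hence, with $P=M(\xi)$ and $R=M(z\xi)$, the vectors $\tilde x,\tilde y$ are positive null vectors of $R-\alpha P$ for any rational $\xi$.

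Since $M(\ell)=\tilde y\tilde x^{\top}>0$ and $M(z\ell)=\alpha\,\tilde y\tilde x^{\top}$, any rational $\xi$ close to $\ell$ gives $P>0$, and also $R>0$ when $\alpha>0$. Then \cref{lem:matrixgames}(v) yields $\val(R-\alpha P)=0$ directly, with no completely-mixed or Shapley--Snow argument. Your target identity $\det(R-wP)=\kappa f(w)$ holds automatically with $\kappa=(-1)^m\det P$, but it is the eigenvector structure, not the determinant, that selects the root $\alpha$.
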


Since duplicating actions of both players leaves all discounted values, and hence the undiscounted value, unchanged, \cref{thm:main-intro} realizes every real algebraic number of degree at most $m$ as the value of a rational $m\times m$ absorbing game. Combining this with the degree bound of \citet{OB21} yields an exact characterization.

\begin{corollary}\label{cor:char-intro}
For every $m\geq 1$, the set of undiscounted values of rational $m\times m$ absorbing games is exactly the set of real algebraic numbers of degree at most $m$ over $\Q$.
\end{corollary}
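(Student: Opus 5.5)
This corollary should follow from \cref{thm:main-intro} together with the degree bound of \citet{OB21}. We prove the two inclusions separately.

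\emph{Values are algebraic of degree at most $m$.} Let $\Gamma$ be a rational $m\times m$ absorbing game. By \citet{OB21}, its undiscounted value $v$ is algebraic of degree at most $m$ over $\Q$. The value is real because all stage payoffs are real. Hence $v$ lies in the claimed set.

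\emph{Every such number is a value.} Let $\alpha\in\R$ be algebraic of degree $d$ with $1\leq d\leq m$.
\begin{itemize}
\item[1.] By \cref{thm:main-intro}, there is a rational $d\times d$ absorbing game $\Gamma_d$ with undiscounted value $\alpha$.
\item[2.] Pad $\Gamma_d$ to size $m\times m$ by duplicating actions. Choose surjections $\sigma,\tau\colon\{1,\dots,m\}\to\{1,\dots,d\}$, for instance mapping every index $i>d$ to $d$. Define $\Gamma_m$ by giving the action pair $(i,j)$ the stage payoff, absorption probabilities and absorbing payoffs of $(\sigma(i),\tau(j))$ in $\Gamma_d$. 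All data of $\Gamma_m$ are still rational.
\item[3.] Show that $v_\lambda(\Gamma_m)=v_\lambda(\Gamma_d)$ for every $\lambda\in(0,1]$. Both discounted values are the unique fixed points of their respective Shapley operators. So it suffices to show that, for every real $w$, the one-shot matrix game $G_m(w)$ underlying the Shapley operator of $\Gamma_m$ at the non-absorbing state has the same value as the corresponding game $G_d(w)$ for $\Gamma_d$.
\item[4.] This holds because $G_m(w)$ is obtained from $G_d(w)$ by repeating rows and columns. Any mixed strategy in $G_d(w)$ lifts to one in $G_m(w)$ by putting its weight on one preimage of each action, with the same guarantees. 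Conversely, any mixed strategy in $G_m(w)$ pushes forward along $\sigma$ or $\tau$ to a mixed strategy in $G_d(w)$ with identical payoffs against every opponent action. Hence the two values coincide, and the Shapley operators have the same fixed point.
\item[5.] Letting $\lambda\to0$ gives $v(\Gamma_m)=\alpha$, so $\alpha$ is the value of a rational $m\times m$ absorbing game.
\end{itemize}

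The only step needing care is step 4, the invariance of the discounted value under action duplication. It is a standard fact about matrix games with repeated rows and columns, so no real obstacle is expected. The two inclusions together give the corollary.
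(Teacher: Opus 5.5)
Your proposal is correct and follows the paper's proof. The inclusion into algebraic numbers of degree at most $m$ comes from the degree bound of \citet{OB21}, and the reverse inclusion applies the main theorem at degree $d$, pads to $m\times m$ by duplicating actions, and passes to the limit of $v_\lambda$. The one difference is how you show that duplication leaves $v_\lambda$ unchanged: you use the Shapley operator and the invariance of matrix-game values under repeated rows and columns, whereas the paper projects and lifts behavior strategies in the full game. Your version is slightly cleaner because it never has to handle history-dependent strategies.
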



\paragraph{Strategy of the proof.}
Recall that an absorbing game is given by three real matrices, indexed by the players' actions $(i,j)$: the stage payoff $g=(g_{ij})$, the absorption probability $q=(q_{ij})$ with $q_{ij}\in[0,1]$, and the absorbing payoff $w=(w_{ij})$. When the players choose actions $i$ and $j$, the payoff $g_{ij}$ is received and, with probability $q_{ij}$, the game is absorbed, all subsequent payoffs then equalling $w_{ij}$. The proof combines two ingredients.

\emph{(1) Reduction to a single matrix-game equation.}
For a strictly absorbing game, i.e., one with $q_{ij}>0$ for all $(i,j)$, the asymptotic analysis reduces to one scalar equation: the discounted values $v_\lambda$ converge as $\lambda\to0$ to the unique zero $z^*$ of the map $z\mapsto\val W_0(z)$, where
\[
W_0(z)\defas\bigl(q_{ij}(w_{ij}-z)\bigr)_{i,j}
\]
is the limit auxiliary matrix of \citet{AO19} and \citet{OV23}, and $\val$ denotes the value of a matrix game (\cref{lem:uniformlimit}); in this limit the non-absorbing stage payoffs $g_{ij}$ play no role. Two features of this characterization are essential. First, it is an equation for the value itself, not merely a polynomial equation that the value satisfies, so the ``root-selection'' difficulty that obstructs determinant-based approaches to \cref{conj:OV} (see the discussion in \citealp[Section~5]{OV23}) does not arise. Second, to certify $\val W_0(\alpha)=0$ it suffices to exhibit mixed actions $x^*,y^*$ with $x^{*\top}W_0(\alpha)\geq0$ and $W_0(\alpha)\,y^*\leq0$ componentwise; our construction produces such $x^*,y^*$ with equalities throughout.

\emph{(2) An algebraic construction.}
By~(1), it suffices to find, for a real algebraic number $\alpha$ of degree $m$ with minimal polynomial $\mu$, rational $m\times m$ matrices $A$ and $Q$ with $Q$ entrywise positive and $\val(A-\alpha Q)=0$. From such a pair one builds the game: fixing an integer $N\geq\max_{ij}Q_{ij}$ and setting $g_{ij}\defas0$, $q_{ij}\defas Q_{ij}/N$, and $w_{ij}\defas A_{ij}/Q_{ij}$ yields a strictly absorbing game with $W_0(z)=\tfrac1N(A-zQ)$, which is moreover positive recursive when $\alpha>0$ (\cref{thm:main}). We construct $A$ and $Q$ so that $A-\alpha Q$ has strictly positive left and right null vectors $\tilde x$ and $\tilde y$,
\[
\tilde x^{\top}(A-\alpha Q)=0,
\qquad
(A-\alpha Q)\,\tilde y=0,
\qquad
\tilde x,\tilde y>0 ;
\]
normalizing $\tilde x$ and $\tilde y$ to probability vectors gives optimal strategies that secure exactly $0$ in the matrix game $A-\alpha Q$, hence $\val(A-\alpha Q)=0$.
These matrices come from the arithmetic of the field $\Q[z]/(\mu)$: $A$ and $Q$ represent multiplication by suitable polynomials, in a rational basis of the space of polynomials of degree at most $m-1$. The difficulty is to adapt this basis to $\alpha$ such that the two null vectors, formed from the values of the basis polynomials at $\alpha$ and from the Lagrange interpolation polynomial of $\alpha$, are strictly positive. A perturbation and density argument provides such a basis (\cref{sec:algebra}).

Finally, it is noteworthy that the resulting games are completely explicit. Fully worked examples, for $\sqrt[3]3$ and for the real root of $z^3-z-1$, are given in \cref{sec:example}.

\section{Preliminaries}\label{sec:prelim}

We present the class of zero-sum absorbing games, their dynamics, and the discounted and undiscounted values; see \citep{Koh74,OV23}.

\paragraph{Notation.}
Calligraphic letters (e.g., $\I,\J$) denote sets, their elements (e.g., $i,j$) appear in lowercase, and random elements use uppercase (e.g., $I,J$). 
For a finite set $\C$, let $\Delta(\C)$ be the set of probability distributions over $\C$. 
The sets of real, rational, natural, and non-zero natural numbers are denoted by $\R$, $\Q$, $\N$, and $\N^*$, respectively. 
For a positive integer $m$, let $[m] \defas \{1,\ldots,m\}$.
For a matrix $M\in\R^{\I\times\J}$, $M^\top$ denotes its transpose, and $\|M\|_\infty\defas\max_{i,j}|M_{ij}|$.
Moreover, the notation $M>0$ means that every entry of $M$ is strictly positive.

\paragraph{Model.}
A \emph{(zero-sum) absorbing game}, denoted by $\Gamma$, is defined by a tuple $\Gamma=(\I,\J,g,q,w)$, where:
\begin{itemize}
\item $\I=[m]$ is the finite set of actions of Player~1;
\item $\J=[n]$ is the finite set of actions of Player~2;
\item $g\colon\I\times\J\to\R$ is the (non-absorbing) stage payoff function;
\item $q\colon\I\times\J\to[0,1]$ is the absorption probability function; and 
\item $w\colon\I\times\J\to\R$ is the absorbing payoff function.
\end{itemize}
We write $g_{ij}\defas g(i,j)$, $q_{ij}\defas q(i,j)$, and $w_{ij}\defas w(i,j)$, identify $g$, $q$, and $w$ with matrices in $\R^{\I\times\J}$, and refer to $\Gamma$ as an $m\times n$ absorbing game. The absorbing payoff $w_{ij}$ is relevant only when $q_{ij}>0$.

\paragraph{Subclasses.}
The game $\Gamma$ is \emph{rational} if $g_{ij},q_{ij},w_{ij}\in\Q$ for every $(i,j)\in\I\times\J$. It is \emph{strictly absorbing} (also called \emph{halting}) if $q_{ij}>0$ for every $(i,j)\in\I\times\J$, i.e., every action pair is absorbing with positive probability.
It is \emph{recursive} if $g_{ij}=0$ for every $(i,j)\in\I\times\J$, i.e., payoff accrues only upon absorption, and \emph{positive recursive} if, in addition, $w_{ij}>0$ whenever $q_{ij}>0$.

\paragraph{Dynamics.}
A play of $\Gamma$ proceeds in stages. At each stage $\ell\in\N^*$, provided no absorption has yet occurred:
\begin{enumerate}
\item Player~1 and Player~2 select, simultaneously and independently, actions $I_\ell\in\I$ and $J_\ell\in\J$;
\item a stage payoff $G_\ell\defas g(I_\ell,J_\ell)$ is produced;
\item absorption occurs with probability $q(I_\ell,J_\ell)$.
\end{enumerate}
Once absorption occurs at some stage $\ell_0$, the game is \emph{absorbed}: every subsequent stage payoff equals the absorbing payoff, i.e., $G_\ell\defas w(I_{\ell_0},J_{\ell_0})$ for all $\ell>\ell_0$.

\paragraph{History.}
A \emph{history} before stage $\ell$ is a finite sequence $h=(i_1,j_1,\dots,i_{\ell-1},j_{\ell-1})$ of action pairs, with the empty history $\varnothing$ before stage~$1$. Let $\mathcal H_\ell\defas(\I\times\J)^{\ell-1}$ be the set of histories before stage $\ell$, where $(\I\times\J)^0\defas\{\varnothing\}$, and let $\mathcal H\defas\bigcup_{\ell\geq1}\mathcal H_\ell$.

\paragraph{Play.}
A \emph{play} is an infinite sequence of action pairs, i.e., an element of $\Omega\defas(\I\times\J)^{\N^*}$.

\paragraph{Strategy.}
A \emph{(behavior) strategy} of Player~1 is a map $\sigma\colon\mathcal H\to\Delta(\I)$ assigning a mixed action $\sigma(h)\in\Delta(\I)$ to every history $h\in\mathcal H$. 
The set of such strategies is denoted by $\Sigma$. 
Similarly, a strategy of Player~2 is a map $\tau\colon\mathcal H\to\Delta(\J)$, and $\mathcal T$ denotes the corresponding set. 
A strategy is \emph{stationary} if it plays the same mixed action at every stage and is denoted by an element $x\in\Delta(\I)$ (resp.\ $y\in\Delta(\J)$).

\paragraph{Probability Measure.}
Given a pair of strategies $(\sigma,\tau)\in\Sigma\times\mathcal T$, the dynamics above induce, by the Ionescu--Tulcea theorem, a unique probability measure $\PR_{\sigma,\tau}$ over the set of plays $\Omega$. We denote by $\E_{\sigma,\tau}$ the corresponding expectation. 

\paragraph{Payoff.}
Fix a strategy pair $(\sigma,\tau)\in\Sigma\times\mathcal T$. For a discount factor $\lambda\in(0,1]$, the \emph{$\lambda$-discounted payoff} is
\[
\gamma_\lambda(\sigma,\tau)\defas\E_{\sigma,\tau}\Bigl[\sum_{\ell\geq1}\lambda(1-\lambda)^{\ell-1}G_\ell\Bigr],
\]
and the \emph{long-run average payoff} is the limit inferior of the average stage payoffs:
\[
\gamma(\sigma,\tau)\defas\E_{\sigma,\tau}\Bigl[\liminf_{L\to\infty}\frac1L\sum_{\ell=1}^{L}G_\ell\Bigr].
\]
In both cases Player~1 maximizes and Player~2 minimizes.

\paragraph{Value.}
For every discount factor $\lambda\in(0,1]$, the $\lambda$-discounted game has a \emph{value} $v_\lambda\in\R$, and both players have optimal stationary strategies \citep{Sha53}:
\begin{equation}\label{eq:minimax}
v_\lambda=\max_{x\in\Delta(\I)}\ \min_{y\in\Delta(\J)}\ \gamma_\lambda(x,y)
=\min_{y\in\Delta(\J)}\ \max_{x\in\Delta(\I)}\ \gamma_\lambda(x,y),
\end{equation}
the outer extrema being attained at optimal stationary strategies.
Similarly, the undiscounted game has an \emph{undiscounted value}
\begin{equation}\label{eq:limitvalue}
v\defas\sup_{\sigma\in\Sigma}\ \inf_{\tau\in\mathcal T}\ \gamma(\sigma,\tau)
=\inf_{\tau\in\mathcal T}\ \sup_{\sigma\in\Sigma}\ \gamma(\sigma,\tau)\ \in\R,
\end{equation}
the two sides coinciding by the results of \citet{Koh74,MN81}. Moreover, $v=\lim_{\lambda\to0^+}v_\lambda$.

\section{Background on Absorbing Games}\label{sec:background}

We recall the facts on which our construction relies: the value of a matrix game and its elementary properties, and the auxiliary matrices of \citet{AO19} and \citet{OV23} that link an absorbing game to a one-parameter family of matrix games.

\subsection{Matrix Games}

A zero-sum matrix game is a matrix $M\in\R^{\I\times\J}$ in which Player~1 (the maximizer) selects a row $i\in\I$, Player~2 (the minimizer) selects a column $j\in\J$, and the payoff to Player~1 is $M_{ij}$.
By von Neumann's minimax theorem \citep{vN28}, this game has a \emph{value}
\[
\val(M)\defas\max_{x\in\Delta(\I)}\min_{y\in\Delta(\J)}x^{\top}My=\min_{y\in\Delta(\J)}\max_{x\in\Delta(\I)}x^{\top}My,
\]
attained at optimal mixed actions $x^*\in\Delta(\I)$ and $y^*\in\Delta(\J)$.

The following elementary properties of the value are used repeatedly.

\begin{lemma}\label{lem:matrixgames}
Let $M,M'\in\R^{\I\times\J}$, let $c\in\R$ and $t\geq0$. Then:
\begin{enumerate}
\item[\emph{(i)}] $\val(M+c\,\mathbf{1}\mathbf{1}^{\top})=\val(M)+c$, where $\mathbf{1}\mathbf{1}^{\top}$ is the all-ones matrix;
\item[\emph{(ii)}] if $M'_{ij}\leq M_{ij}-c$ for all $(i,j)$, then $\val(M')\leq\val(M)-c$;
\item[\emph{(iii)}] $|\val(M)-\val(M')|\leq\|M-M'\|_\infty$;
\item[\emph{(iv)}] $\val(tM)=t\,\val(M)$; and 
\item[\emph{(v)}] if there exists $x\in\Delta(\I)$ with $x^{\top}M\geq0$ componentwise, then $\val(M)\geq0$; if there exists $y\in\Delta(\J)$ with $My\leq0$ componentwise, then $\val(M)\leq0$. In particular, if both vectors exist then $\val(M)=0$.
\end{enumerate}
\end{lemma}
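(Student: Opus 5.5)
All five items follow directly from the minimax characterization
\[
\val(M)=\max_{x\in\Delta(\I)}\min_{y\in\Delta(\J)}x^\top My=\min_{y\in\Delta(\J)}\max_{x\in\Delta(\I)}x^\top My,
\]
together with the fact that the bilinear form $x^\top My$ is monotone and homogeneous in $M$ when $x$ and $y$ are probability vectors. We prove them in the order (ii), (i), (iii), (iv), (v). Each step is a one-line manipulation, so no real obstacle is expected. The only care needed is the bookkeeping in (ii) and (v), where we must use both the nonnegativity of $x,y$ and the normalization $\sum_i x_i=\sum_j y_j=1$.

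\emph{Item (ii).} For any $x\in\Delta(\I)$ and $y\in\Delta(\J)$, the entries of $xy^\top$ are nonnegative and sum to $1$. Hence
\[
x^\top M'y=\sum_{i,j}x_iy_jM'_{ij}\leq\sum_{i,j}x_iy_j(M_{ij}-c)=x^\top My-c .
\]
Taking $\min_y$ and then $\max_x$ preserves this inequality, which gives $\val(M')\leq\val(M)-c$.

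\emph{Item (i).} We have $x^\top(M+c\mathbf 1\mathbf 1^\top)y=x^\top My+c$, since $x^\top\mathbf 1=\mathbf 1^\top y=1$. The additive constant passes through both $\min_y$ and $\max_x$, which gives (i). Alternatively, apply (ii) twice with equality.

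\emph{Item (iii).} Set $c=\|M-M'\|_\infty$. Then $M'_{ij}\leq M_{ij}+c$ for all $(i,j)$, so (ii), applied with $-c$, gives $\val(M')\leq\val(M)+c$. By symmetry $\val(M)\leq\val(M')+c$, and together these give (iii).

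\emph{Item (iv).} For $t\geq0$ we have $x^\top(tM)y=t\,x^\top My$. Multiplication by a nonnegative scalar commutes with $\min$ and $\max$, so $\val(tM)=t\,\val(M)$. Nonnegativity of $t$ is essential here, since a negative factor would swap $\min$ and $\max$.

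\emph{Item (v).} Suppose $x\in\Delta(\I)$ satisfies $x^\top M\geq0$ componentwise. Then for every $y\in\Delta(\J)$ we have $x^\top My=\sum_j(x^\top M)_jy_j\geq0$. Hence
\[
\val(M)\geq\min_y x^\top My\geq0 .
\]
Symmetrically, suppose $y\in\Delta(\J)$ satisfies $My\leq0$. Then $x^\top My\leq0$ for every $x\in\Delta(\I)$, so
\[
\val(M)=\min_{y'}\max_x x^\top My'\leq\max_x x^\top My\leq0 .
\]
If both vectors exist, the two inequalities combine to give $\val(M)=0$.
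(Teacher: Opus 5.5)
Your proof is correct and follows essentially the same route as the paper: every item is read off from the minimax formula, with (ii) obtained by taking $\min_y$ then $\max_x$ of the inequality $x^\top M'y\leq x^\top My-c$, (iii) from (ii) applied in both directions, and (v) by bounding $\min_y x^\top My$ (resp.\ $\max_x x^\top My$) for the given vector. The only difference is that you spell out (i) and (iv), which the paper simply calls immediate.
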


\begin{proof}
(i) and (iv) are immediate from the minimax formula. (ii): for every $x\in\Delta(\I)$ and $y\in\Delta(\J)$, $x^{\top}M'y\leq x^{\top}My-c$; take $\min_y$ then $\max_x$. (iii) follows from (ii) applied in both directions with $c=-\|M-M'\|_\infty$. (v): if $x^{\top}M\geq0$ then $x^{\top}My\geq0$ for every $y\in\Delta(\J)$, so $\val(M)\geq\min_yx^{\top}My\geq0$; the second claim is symmetric.
\end{proof}
In particular, \cref{lem:matrixgames}(iii) shows that $M\mapsto\val(M)$ is $1$-Lipschitz, hence continuous.

\subsection{\texorpdfstring{Auxiliary Matrices $W_\lambda(z)$}{Auxiliary Matrices W-lambda(z)}}

For a matrix $M\in\R^{\I\times\J}$ and $x\in\Delta(\I)$, $y\in\Delta(\J)$, set
\[
M(x,y)\defas x^{\top}My=\sum_{i\in\I}\sum_{j\in\J}x_iy_jM_{ij}\in\R ,
\]
and let $q\circ w\in\R^{\I\times\J}$ denote the Hadamard product of $q$ and $w$, i.e., $(q\circ w)_{ij}\defas q_{ij}w_{ij}$. Under the stationary pair $(x,y)$, the scalars $q(x,y)$, $g(x,y)$, and $(q\circ w)(x,y)$ are, respectively, the one-stage absorption probability, the expected non-absorbing stage payoff, and the absorption-weighted expected absorbing payoff.

Following \citet[Definition~1]{OV23} (also see \citealp{AO19}), we associate with an $m\times n$ absorbing game $(g,q,w)$, for $\lambda\in[0,1]$ and $z\in\R$, the matrix $W_\lambda(z)\in\R^{m\times n}$ with entries
\begin{equation}\label{eq:W}
W_\lambda(z)_{ij}\defas \lambda g_{ij}+(1-\lambda)(q\circ w)_{ij}-z\bigl(\lambda+(1-\lambda)q_{ij}\bigr),
\qquad
W_0(z)_{ij}=q_{ij}(w_{ij}-z).
\end{equation}

\begin{lemma}\label{lem:gamma}
For all $x\in\Delta(\I)$, $y\in\Delta(\J)$ and $\lambda\in(0,1]$, the discounted payoff of the stationary pair $(x,y)$ is
\[
\gamma_\lambda(x,y)=\frac{\lambda\,g(x,y)+(1-\lambda)\,(q\circ w)(x,y)}{\lambda+(1-\lambda)\,q(x,y)} .
\]
\end{lemma}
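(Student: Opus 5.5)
The plan is a first-step (renewal) decomposition of the discounted payoff under the stationary pair $(x,y)$. Because both strategies are stationary, the play before absorption is an i.i.d.\ sequence of action pairs with law $x\otimes y$. Moreover, conditionally on the game not yet having been absorbed, the continuation from stage $2$ onward is distributed exactly like the original game shifted by one stage. Let $\gamma\defas\gamma_\lambda(x,y)$. We condition on the first action pair $(I_1,J_1)=(i,j)$, which has probability $x_iy_j$.

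First, we compute the contribution of stage $1$ and of the future. Stage $1$ contributes $\lambda g_{ij}$. With probability $q_{ij}$ the game is absorbed, and every later stage pays $w_{ij}$. Since $\sum_{\ell\geq2}\lambda(1-\lambda)^{\ell-1}=1-\lambda$, this branch contributes $(1-\lambda)w_{ij}$. With probability $1-q_{ij}$ the game is not absorbed. The discounted sum from stage $2$ on is then $(1-\lambda)$ times the $\lambda$-discounted payoff of a fresh copy of the game under the same stationary pair, whose expectation is $\gamma$ by stationarity (the strong Markov/shift property under $\PR_{x,y}$). Hence
\[
\gamma=\sum_{i,j}x_iy_j\Bigl[\lambda g_{ij}+(1-\lambda)q_{ij}w_{ij}+(1-\lambda)(1-q_{ij})\gamma\Bigr]
=\lambda g(x,y)+(1-\lambda)(q\circ w)(x,y)+(1-\lambda)\bigl(1-q(x,y)\bigr)\gamma .
\]

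Second, we solve this linear equation for $\gamma$. Rearranging gives $\gamma\bigl(1-(1-\lambda)(1-q(x,y))\bigr)=\lambda g(x,y)+(1-\lambda)(q\circ w)(x,y)$. The coefficient equals $\lambda+(1-\lambda)q(x,y)$, which is at least $\lambda>0$, so we may divide and obtain the claimed formula.

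The only point requiring care is the justification of the recursion: that $\gamma$ is finite and that the conditional continuation payoff equals $\gamma$. Finiteness is immediate, since payoffs are bounded by $\max(\|g\|_\infty,\|w\|_\infty)$ and the discount weights sum to $1$. The continuation identity follows from the definition of $\PR_{x,y}$ via Ionescu--Tulcea, because stationarity makes the conditional law of the shifted play, given non-absorption at stage $1$, equal to $\PR_{x,y}$. As an alternative that avoids this conditioning argument, one can sum the series directly. The probability of being non-absorbed at the start of stage $\ell$ is $(1-q(x,y))^{\ell-1}$. Writing the expected stage-$\ell$ payoff in terms of this probability and summing the resulting geometric series yields the same expression.
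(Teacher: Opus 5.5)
Your proposal is correct and matches the paper's proof: you split off the first stage under the i.i.d.\ stationary play to get $\gamma_\lambda(x,y)=\lambda g(x,y)+(1-\lambda)[(q\circ w)(x,y)+(1-q(x,y))\gamma_\lambda(x,y)]$, then solve using $1-(1-\lambda)(1-q(x,y))=\lambda+(1-\lambda)q(x,y)>0$. Your remarks on finiteness of $\gamma_\lambda$ and on the shift identity make explicit what the paper leaves implicit; that identity needs only the ordinary Markov property at stage $2$, not the strong one.
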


\begin{proof}
Under the stationary pair $(x,y)$ the stages are i.i.d. with law $x \otimes y$. Therefore, splitting off the first stage gives
\[
\gamma_\lambda(x,y)=\lambda\,g(x,y)+(1-\lambda)\bigl[(q\circ w)(x,y)+(1-q(x,y))\,\gamma_\lambda(x,y)\bigr].
\]
The first stage contributes the discounted payoff $\lambda\,g(x,y)$; in the continuation, absorption freezes the play with the expected absorbing payoff $(q\circ w)(x,y)$ while with probability $1-q(x,y)$ the game returns to its initial state, of value $\gamma_\lambda(x,y)$. Solving and using $1-(1-\lambda)(1-q(x,y))=\lambda+(1-\lambda)q(x,y)$ yields the result.
\end{proof}

Combining \cref{eq:W} with \cref{lem:gamma}, a direct expansion gives, for all $x\in\Delta(\I)$, $y\in\Delta(\J)$, $z\in\R$ and $\lambda\in(0,1]$,
\begin{align}
x^{\top}W_\lambda(z)\,y &=\lambda g(x,y)+(1-\lambda)(q\circ w)(x,y)-z\bigl(\lambda+(1-\lambda)q(x,y)\bigr) \nonumber\\
&=\bigl(\gamma_\lambda(x,y)-z\bigr)\bigl(\lambda+(1-\lambda)q(x,y)\bigr),
    \label{eq:bilinear}
\end{align}
where the second equality uses \cref{lem:gamma}.

\begin{lemma}[{\citealp[Theorem~1]{AO19}}]\label{lem:shapleypencil}
For every $\lambda\in(0,1]$, $\val W_\lambda(v_\lambda)=0$. Moreover, the map $z\mapsto\val W_\lambda(z)$ is continuous, strictly decreasing, and satisfies $\val W_\lambda(z)\to\mp\infty$ as $z\to\pm\infty$.
\end{lemma}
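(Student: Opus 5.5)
We have to prove \cref{lem:shapleypencil}: $\val W_\lambda(v_\lambda)=0$ for $\lambda\in(0,1]$, and $z\mapsto\val W_\lambda(z)$ is continuous, strictly decreasing, and tends to $\mp\infty$ as $z\to\pm\infty$. The idea is to work directly with the bilinear identity \cref{eq:bilinear}, where the factor $\lambda+(1-\lambda)q(x,y)$ is at least $\lambda>0$, and to read off the sign of $\val W_\lambda(z)$ from the sign of $\gamma_\lambda(x,y)-z$.

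\emph{Zero at $v_\lambda$.} Let $x^*,y^*$ be optimal stationary strategies in the $\lambda$-discounted game, as given by \cref{eq:minimax}. Then $\gamma_\lambda(x^*,y)\geq v_\lambda$ for every $y\in\Delta(\J)$, so by \cref{eq:bilinear}
\[
x^{*\top}W_\lambda(v_\lambda)\,y=\bigl(\gamma_\lambda(x^*,y)-v_\lambda\bigr)\bigl(\lambda+(1-\lambda)q(x^*,y)\bigr)\geq0 .
\]
Applying this to pure $y=e_j$ gives $x^{*\top}W_\lambda(v_\lambda)\geq0$ componentwise. Symmetrically, $\gamma_\lambda(x,y^*)\leq v_\lambda$ for all $x$ gives $W_\lambda(v_\lambda)y^*\leq0$. \Cref{lem:matrixgames}(v) then yields $\val W_\lambda(v_\lambda)=0$.

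The only subtlety is that \cref{eq:minimax} states optimality only against stationary opponents. That is exactly what is needed here, since the max-min over stationary strategies is attained at $x^*$ with value $v_\lambda$.

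\emph{Monotonicity, continuity and limits.} By \cref{eq:W}, the entries satisfy
\[
W_\lambda(z')_{ij}=W_\lambda(z)_{ij}-(z'-z)\bigl(\lambda+(1-\lambda)q_{ij}\bigr).
\]
Since $\lambda\le\lambda+(1-\lambda)q_{ij}\le1$, for $z'>z$ each entry of $W_\lambda(z')$ is at most the corresponding entry of $W_\lambda(z)$ minus $\lambda(z'-z)$. \Cref{lem:matrixgames}(ii) then gives
\[
\val W_\lambda(z')\leq\val W_\lambda(z)-\lambda(z'-z),
\]
which is strict decrease. Letting $z'\to+\infty$ with $z$ fixed gives $\val W_\lambda(z')\to-\infty$, and the symmetric argument with $z'\to-\infty$ gives $+\infty$. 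The same entrywise bound shows $\|W_\lambda(z)-W_\lambda(z')\|_\infty\leq|z-z'|$, so by \cref{lem:matrixgames}(iii) the map is $1$-Lipschitz and hence continuous.

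Nothing here is a real obstacle. The main point of care is the justification in the first step that stationary optimality suffices.
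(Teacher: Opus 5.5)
Your proof is correct and follows the paper's argument essentially step for step. For the zero at $v_\lambda$ you use optimal stationary strategies together with \cref{eq:bilinear} and \cref{lem:matrixgames}(v); for monotonicity and the $1$-Lipschitz property you use the entrywise bound $\lambda\le\lambda+(1-\lambda)q_{ij}\le1$ with \cref{lem:matrixgames}(ii),(iii). The only cosmetic difference is that you obtain the limits at $\pm\infty$ from the linear decrease bound $\val W_\lambda(z')\le\val W_\lambda(z)-\lambda(z'-z)$, whereas the paper bounds the value by $\min_{ij}$ or $\max_{ij}$ of the entries; both are equally valid.
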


\begin{proof}
By \citet[Theorem~2]{Sha53}, there exists an optimal stationary strategy $x^* \in \Delta(\I)$ for Player~1.
By \cref{eq:minimax}, we have $\gamma_\lambda(x^*,y)\geq v_\lambda$ for every $y\in\Delta(\J)$.
Moreover, by \cref{eq:bilinear} with $z=v_\lambda$, this is equivalent to $x^{*\top}W_\lambda(v_\lambda)\,y\geq0$ for every $y\in\Delta(\J)$.
Hence, we have $x^{*\top}W_\lambda(v_\lambda)\geq0$ componentwise. 
By \cref{lem:matrixgames}(v), we have $\val W_\lambda(v_\lambda)\geq0$. Symmetrically, an optimal stationary strategy $y^*$ of Player~2 gives $W_\lambda(v_\lambda)\,y^*\leq0$ componentwise and $\val W_\lambda(v_\lambda)\leq0$. 
Therefore, we have $\val W_\lambda(v_\lambda)=0$.

For $z<z'$, we have
\[
    W_\lambda(z')_{ij}-W_\lambda(z)_{ij}=-(z'-z)\bigl(\lambda+(1-\lambda)q_{ij}\bigr)\in\bigl[-(z'-z),-\lambda(z'-z)\bigr] .
\]
Therefore, by \cref{lem:matrixgames}(ii) and (iii), we get
\begin{equation}\label{eq:strictmono}
\lambda(z'-z)\ \leq\ \val W_\lambda(z)-\val W_\lambda(z')\ \leq\ z'-z .
\end{equation}
Thus, $z\mapsto\val W_\lambda(z)$ is $1$-Lipschitz and strictly decreasing. As $z\to-\infty$, every entry of $W_\lambda(z)$ tends to $+\infty$, and $\val W_\lambda(z)\geq\min_{ij}W_\lambda(z)_{ij}\to+\infty$. Symmetrically $\val W_\lambda(z)\leq\max_{ij}W_\lambda(z)_{ij}\to-\infty$ as $z\to+\infty$.
\end{proof}

\section{Strictly Absorbing Games}\label{sec:uniform}

This section establishes the first ingredient of the proof.
For strictly absorbing games the asymptotic analysis reduces to a single matrix-game equation.
We show that the discounted values converge, as $\lambda\to0^+$, to the unique zero of the map $z\mapsto\val W_0(z)$ (\cref{lem:uniformlimit}).
This yields a simple criterion for the undiscounted value to equal a given algebraic number $\alpha$, namely the existence of strategies that are optimal with value $0$ in the matrix game $W_0(\alpha)$.
\cref{ex:OVtheorem3} illustrates the criterion on a quadratic example.

Throughout this section, $\Gamma=(g,q,w)$ is an $m\times n$ absorbing game with
\[
\delta\defas \min_{i,j}q_{ij}>0
\qquad\text{(strict absorption)},
\]
and we set $R\defas \max_{i,j}\max\bigl(|g_{ij}|,|w_{ij}|\bigr)$.

\begin{lemma}\label{lem:uniformlimit}
The following statements hold:
\begin{enumerate}
\item[\emph{(i)}] The map $\Phi_0:z\mapsto\val W_0(z)$ is continuous and strictly decreasing on $\R$, with $\Phi_0(z)\to\mp\infty$ as $z\to\pm\infty$; in particular, it has a unique zero $z^*$; and
\item[\emph{(ii)}] $v_\lambda\to z^*$ as $\lambda\to0^+$. In particular, the undiscounted value of $\Gamma$ exists and equals $z^*$, the unique solution of $\val\bigl[q_{ij}(w_{ij}-z)\bigr]=0$.
\end{enumerate}
\end{lemma}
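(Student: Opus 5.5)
For part (i), I would repeat the monotonicity argument from the proof of \cref{lem:shapleypencil}, now at $\lambda=0$ and using strict absorption. Fix $z<z'$. Then
\[
W_0(z')_{ij}-W_0(z)_{ij}=-(z'-z)\,q_{ij}\in\bigl[-(z'-z),-\delta(z'-z)\bigr].
\]
By \cref{lem:matrixgames}(ii) and (iii) this gives
\[
\delta(z'-z)\leq\Phi_0(z)-\Phi_0(z')\leq z'-z .
\]
So $\Phi_0$ is $1$-Lipschitz and strictly decreasing. Moreover, $\Phi_0(z)\geq\min_{ij}q_{ij}(w_{ij}-z)$, and for $z<-R$ the right side is at least $\delta(-R-z)\to+\infty$. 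Symmetrically, $\Phi_0(z)\to-\infty$ as $z\to+\infty$. The intermediate value theorem together with strict monotonicity then yields the unique zero $z^*$.

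For part (ii), the idea is to compare $\Phi_\lambda(z)\defas\val W_\lambda(z)$ with $\Phi_0$ uniformly on a bounded interval. First I would note that $|v_\lambda|\leq R$, since every stage payoff is bounded by $R$ in absolute value; consequently $|z^*|\leq R$ as well, by the bound above. Next, for $|z|\leq R$, \cref{eq:W} gives
\[
W_\lambda(z)_{ij}-W_0(z)_{ij}=\lambda\bigl(g_{ij}-q_{ij}w_{ij}-z(1-q_{ij})\bigr),
\]
whose absolute value is at most $3R\lambda$. By \cref{lem:matrixgames}(iii), $|\Phi_\lambda(z)-\Phi_0(z)|\leq3R\lambda$ on $[-R,R]$. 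Applying this at $z=v_\lambda$ and using $\Phi_\lambda(v_\lambda)=0$ from \cref{lem:shapleypencil}, we get $|\Phi_0(v_\lambda)|\leq3R\lambda$.

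The quantitative monotonicity from part (i) then converts this into a bound on $v_\lambda$ itself:
\[
\delta|v_\lambda-z^*|\leq|\Phi_0(v_\lambda)-\Phi_0(z^*)|=|\Phi_0(v_\lambda)|\leq3R\lambda .
\]
Hence $|v_\lambda-z^*|\leq 3R\lambda/\delta\to0$. Since the preliminaries state that $v=\lim_{\lambda\to0^+}v_\lambda$, the undiscounted value equals $z^*$.

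The only delicate point is this last step, where a small value of $\Phi_0$ is turned into closeness of $v_\lambda$ to $z^*$. That requires the lower Lipschitz bound $\delta>0$, which is exactly where strict absorption is used. Without it, $\Phi_0$ could be flat near its zero and the argument would fail.
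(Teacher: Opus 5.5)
Your proposal is correct and follows the paper's proof. Part (i) is the same monotonicity argument, and part (ii) rests on the same two facts: the estimate $\|W_\lambda(z)-W_0(z)\|_\infty\leq 3\lambda R$ for $|z|\leq R$, and $\val W_\lambda(v_\lambda)=0$. The only difference is the last step: the paper takes a convergent subsequence $v_{\lambda_k}\to\bar v$ and then uses continuity of $\Phi_0$ and uniqueness of its zero, whereas you apply the lower bound $\delta|z-z'|\leq|\Phi_0(z)-\Phi_0(z')|$ directly, which avoids compactness and gives the explicit rate $|v_\lambda-z^*|\leq 3R\lambda/\delta$.
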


\begin{proof}
\emph{Properties of $\Phi_0$.}
For $z<z'$, we have
\[
W_0(z')_{ij}-W_0(z)_{ij}=-q_{ij}(z'-z)\in[-(z'-z),-\delta(z'-z)] .
\]
Therefore, by \cref{lem:matrixgames}(ii) and (iii), we get
\[
\delta(z'-z)\ \leq\ \Phi_0(z)-\Phi_0(z')\ \leq\ z'-z .
\]
Thus, $\Phi_0$ is $1$-Lipschitz and strictly decreasing.
Moreover, using $q_{ij}\geq\delta>0$, we have $\Phi_0(z)\geq\min_{ij}q_{ij}(w_{ij}-z)\to+\infty$ as $z\to-\infty$ and $\Phi_0(z)\leq\max_{ij}q_{ij}(w_{ij}-z)\to-\infty$ as $z\to+\infty$.
Hence, $\Phi_0$ has a unique zero $z^*$, which proves~(i).

\emph{Convergence of $v_\lambda$.}
Along every play, every stage payoff equals some $g_{ij}$ or some $w_{ij}$, and hence lies in $[-R,R]$.
Therefore, every $\lambda$-discounted payoff lies in $[-R,R]$, and in particular $|v_\lambda|\leq R$ for all $\lambda\in(0,1]$.
Moreover, for all $(i,j)$, $z\in\R$ and $\lambda\in(0,1]$, we have
\[
W_\lambda(z)_{ij}-W_0(z)_{ij}=\lambda\bigl(g_{ij}-(q\circ w)_{ij}-z(1-q_{ij})\bigr) ,
\]
so whenever $|z|\leq R$ we have $\|W_\lambda(z)-W_0(z)\|_\infty\leq\lambda\bigl(2R+|z|\bigr)\leq3\lambda R$.
Now let $(\lambda_k)_{k\geq1}\subset(0,1]$ satisfy $\lambda_k\to0$ and $v_{\lambda_k}\to\bar v$ for some $\bar v\in[-R,R]$.
By \cref{lem:shapleypencil}, we have $\val W_{\lambda_k}(v_{\lambda_k})=0$ for every $k$.
Therefore, using that $\Phi_0$ is $1$-Lipschitz, the bound $\|W_{\lambda_k}(v_{\lambda_k})-W_0(v_{\lambda_k})\|_\infty\leq3\lambda_k R$, and \cref{lem:matrixgames}(iii), we get
\[
|\Phi_0(\bar v)|
\leq|\Phi_0(\bar v)-\Phi_0(v_{\lambda_k})|
+\bigl|\val W_0(v_{\lambda_k})-\val W_{\lambda_k}(v_{\lambda_k})\bigr|
\leq|\bar v-v_{\lambda_k}|+3\lambda_kR\ \longrightarrow\ 0 .
\]
Thus, we have $\Phi_0(\bar v)=0$, and hence $\bar v=z^*$.
Since $|v_\lambda|\leq R$ for all $\lambda\in(0,1]$, every sequence $\lambda_k\to0$ has a subsequence along which $v_\lambda$ converges, necessarily to $z^*$.
Therefore, $v_\lambda\to z^*$ as $\lambda\to0^+$, which proves~(ii).
\end{proof}

\begin{example}\label{ex:OVtheorem3}
Fix $k\in\N^*$ with $k\geq2$, and consider the $2\times2$ game of \citet[Theorem~3]{OV23}, in which Player~1 chooses a row and Player~2 a column.
It is given by
\[
q=\begin{pmatrix}\tfrac1k&1\\[2pt] 1&1\end{pmatrix},
\qquad
w=\begin{pmatrix}k&1\\[2pt] 1&k\end{pmatrix},
\qquad
g_{11}=0 .
\]
The entry $(1,1)$ is the only one that does not absorb with probability one; it yields the stage payoff $0$ and absorbs with probability $\tfrac1k$, in which case the absorbing payoff is $k$.
Each of the other three entries absorbs with probability one, so its stage payoff is irrelevant and the play stops at the indicated absorbing payoff.
Since $q_{ij}>0$ for every $(i,j)$, the game is strictly absorbing.
We show that its undiscounted value is $\sqrt k$.
Indeed, the limit auxiliary matrix is
\[
W_0(z)=\bigl(q_{ij}(w_{ij}-z)\bigr)_{ij}
=\begin{pmatrix}\tfrac1k(k-z)&1-z\\[2pt] 1-z&k-z\end{pmatrix}.
\]
Therefore, at $z=\sqrt k$ we have
\[
W_0(\sqrt k)
=\begin{pmatrix}\tfrac1k(k-\sqrt k)&1-\sqrt k\\[2pt] 1-\sqrt k&k-\sqrt k\end{pmatrix}
=(\sqrt k-1)\begin{pmatrix}\tfrac{1}{\sqrt k}&-1\\[2pt] -1&\sqrt k\end{pmatrix} .
\]
The right-hand matrix is singular, since its determinant is $\tfrac{1}{\sqrt k}\cdot\sqrt k-(-1)(-1)=0$.
Let $x^*=y^*=\frac{1}{\sqrt k+1}(\sqrt k,\,1)^{\top}$, i.e., each player plays the first action with probability $\frac{\sqrt k}{\sqrt k+1}$ and the second with probability $\frac{1}{\sqrt k+1}$.
Then we have
\[
x^{*\top}W_0(\sqrt k)=0
\qquad\text{and}\qquad
W_0(\sqrt k)\,y^*=0 .
\]
Therefore, by \cref{lem:matrixgames}(v), we have $\val W_0(\sqrt k)=0$.
Hence $\sqrt k$ is the unique zero of $z\mapsto\val W_0(z)$, and by \cref{lem:uniformlimit} the undiscounted value of the game is $\sqrt k$.
\end{example}

\section{Algebraic Construction}\label{sec:algebra}

This section is the main technical contribution of the paper.
By \cref{sec:uniform}, to realize a given real algebraic number $\alpha$ as an undiscounted value it suffices to construct a strictly absorbing game whose limit auxiliary matrix has value $0$ at $\alpha$, witnessed by a pair of optimal strategies.
We therefore look for rational $m\times m$ matrices $A$ and $Q$, with $Q>0$, such that the matrix $A-\alpha Q$ is singular with strictly positive left and right null vectors $\tilde x$ and $\tilde y$,
\[
\tilde x^{\top}(A-\alpha Q)=0,
\qquad
(A-\alpha Q)\,\tilde y=0,
\qquad
\tilde x,\tilde y>0 .
\]
Thus, normalizing $\tilde x$ and $\tilde y$ to probability vectors yields fully mixed strategies that are optimal with value $0$ in the matrix game $A-\alpha Q$.

We first define the algebraic objects that we use to construct these matrices.

\paragraph{Polynomial.}
We write $\R[z]$ and $\Q[z]$ for the sets of polynomials with real and rational coefficients, respectively.
A polynomial $f=\sum_{k=0}^m a_kz^k$ with $a_m\neq0$ has \emph{degree} $\deg f\defas m$ and \emph{leading coefficient} $a_m$, and is \emph{monic} if $a_m=1$.
We denote by $\PP$ the $m$-dimensional real vector space of polynomials of degree at most $m-1$, and by $\PQ$ its rational subspace.
We say that $\alpha$ is a \emph{root} of $f$ if $f(\alpha)=0$, and we write $\frac{d}{dz}f$ for the derivative of $f$.

\paragraph{Algebraic number.}
A number $\alpha \in \R$ is a real algebraic number of \emph{degree} $m\geq1$ if it is a root of a rational polynomial of degree $m$ and of no nonzero rational polynomial of smaller degree.
Its \emph{minimal polynomial} $\mu \in \Q[z]$ is the unique monic rational polynomial of degree $m$ with $\mu(\alpha)=0$.
Because $\mu$ is the minimal polynomial of $\alpha$, the number $\alpha$ is a simple root of $\mu$, i.e.,
\begin{equation}\label{eq:simple}
\frac{d}{dz} \mu(\alpha)\neq0 .
\end{equation}
Every $f\in\R[z]$ can be written uniquely as $f=s\mu+r$ with $s,r\in\R[z]$ and $\deg r<m$, which is called the \emph{Euclidean division} of $f$ by $\mu$, with \emph{remainder} $r$.
We denote by $\rho\colon\R[z]\to\PP$ the \emph{remainder map}, which sends $f$ to this remainder $r$ such that $f-\rho(f)\in\mu\,\R[z]$ and $\deg\rho(f)\leq m-1$ for every $f\in\R[z]$.

\subsection{Remainder Map and Lagrange Interpolation Polynomial}

In this subsection, we first present the algebraic properties of the remainder map $\rho$ (\cref{lem:rho}). We then define the Lagrange interpolation polynomial $\ell$ determined by the polynomial $\mu$ and normalized such that $\ell(\alpha)=1$.
Its key property is the absorption identity $\rho(f\ell)=f(\alpha)\ell$ (\cref{lem:absorption}), which reduces multiplication by $\ell$ followed by the remainder map to evaluation at $\alpha$.

\begin{lemma}\label{lem:rho}
The remainder map $\rho$ is linear over $\R$. Moreover, it satisfies, for all $f,h\in\R[z]$ and $u\in\PP$:
\begin{enumerate}
\item[\emph{(i)}] $\rho(u)=u$;
\item[\emph{(ii)}] $\rho(f)(\alpha)=f(\alpha)$;
\item[\emph{(iii)}] $\rho(f\,\rho(hu))=\rho(fhu)$; and
\item[\emph{(iv)}] $\rho(\Q[z])\subseteq\PQ$.
\end{enumerate}
\end{lemma}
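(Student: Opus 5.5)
All five claims follow from the uniqueness of Euclidean division by $\mu$ together with $\mu(\alpha)=0$. Throughout we write $I\defas\mu\,\R[z]$ for the set of real multiples of $\mu$. The defining property of $\rho$ is that $\rho(f)$ is the unique element of $\PP$ with $f-\rho(f)\in I$. Uniqueness holds because two such remainders $r,r'$ would give $r-r'\in I\cap\PP$. A nonzero multiple of $\mu$ has degree at least $m$, so $I\cap\PP=\{0\}$ and $r=r'$.

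\emph{Linearity.} Take $f,h\in\R[z]$ and $a,b\in\R$. The polynomial $a\rho(f)+b\rho(h)$ lies in $\PP$. Its difference from $af+bh$ is $a(f-\rho(f))+b(h-\rho(h))\in I$. By uniqueness, $\rho(af+bh)=a\rho(f)+b\rho(h)$.

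\emph{Item (i).} For $u\in\PP$, $u$ itself is in $\PP$ and $u-u=0\in I$. So $\rho(u)=u$ by uniqueness.

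\emph{Item (ii).} Write $f=s\mu+\rho(f)$. Evaluating at $\alpha$ and using $\mu(\alpha)=0$ gives $f(\alpha)=\rho(f)(\alpha)$.

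\emph{Item (iii).} Write $hu=s\mu+\rho(hu)$. Multiplying by $f$ gives $f\rho(hu)=fhu-fs\mu$. These two polynomials differ by an element of $I$. Since $\rho$ is linear and vanishes on $I$ (by uniqueness, $\rho(g\mu)=0$), $\rho(f\rho(hu))=\rho(fhu)$.

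\emph{Item (iv).} Since $\mu$ is monic with rational coefficients, long division of some $f\in\Q[z]$ by $\mu$ can be carried out entirely in $\Q[z]$. It produces $s,r\in\Q[z]$ with $\deg r<m$ and $f=s\mu+r$. This decomposition is also a real Euclidean division, so uniqueness over $\R$ gives $\rho(f)=r\in\PQ$.

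There is no real obstacle here. The only point needing care is to derive every property from uniqueness over $\R$. This matters most in (iv), where the real remainder must be identified with the rational one, and for the fact that $\rho$ vanishes on $I$, which is used in (iii).
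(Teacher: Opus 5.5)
Your proposal is correct and follows essentially the same route as the paper: every item is derived from uniqueness of Euclidean division by $\mu$ together with $\mu(\alpha)=0$. You also spell out details the paper leaves implicit, namely why the remainder is unique (a nonzero multiple of $\mu$ has degree at least $m$) and why the rational remainder in (iv) coincides with the real one.
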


\begin{proof}
Linearity and (i) are clear from uniqueness of Euclidean division. (ii): $f-\rho(f)\in\mu\R[z]$ vanishes at $\alpha$ because $\mu(\alpha)=0$. (iii): $f\rho(hu)-fhu=f\cdot(\rho(hu)-hu)\in\mu\R[z]$, and two polynomials differing by a multiple of $\mu$ have the same remainder. (iv): Euclidean division of a rational polynomial by the rational polynomial $\mu$ has rational quotient and remainder.
\end{proof}

Since $\alpha$ is a root of $\mu$, we write $\mu(z)=(z-\alpha)\mu_1(z)$ with $\mu_1\in\R[z]$ of degree $m-1$. Differentiating gives $\frac{d}{dz} \mu(z)=\mu_1(z)+(z-\alpha)\frac{d}{dz} \mu_1(z)$. Therefore, we have $\mu_1(\alpha)=\frac{d}{dz} \mu(\alpha) \neq0$ by \cref{eq:simple}.

\paragraph{Lagrange interpolation polynomial.} We define the \emph{Lagrange interpolation polynomial}
\begin{equation}\label{eq:lagrange}
\ell(z)\defas \frac{\mu(z)}{(z-\alpha)\,\mu_1(\alpha)}=\frac{\mu_1(z)}{\mu_1(\alpha)}\ \in\ \PP .
\end{equation}
The second form shows $\ell(\alpha)=1$, and multiplying by $z-\alpha$ gives $(z-\alpha)\,\ell(z)=\mu(z)/\mu_1(\alpha)$.

\begin{lemma}\label{lem:absorption}
For every $f\in\R[z]$,
\[
\rho\bigl(f\,\ell\bigr)=f(\alpha)\,\ell .
\]
\end{lemma}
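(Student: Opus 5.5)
The identity $\rho(f\ell)=f(\alpha)\ell$ should follow from a factorization of $f-f(\alpha)$ together with the property $(z-\alpha)\ell=\mu/\mu_1(\alpha)$ recorded right after \cref{eq:lagrange}. First, $\alpha$ is a root of $f-f(\alpha)$, so we can write
\[
f(z)-f(\alpha)=(z-\alpha)\,s(z)
\]
for some $s\in\R[z]$. Multiplying by $\ell$ gives
\[
f\ell=f(\alpha)\,\ell+(z-\alpha)\,\ell\,s=f(\alpha)\,\ell+\frac{s}{\mu_1(\alpha)}\,\mu .
\]
Here $\mu_1(\alpha)\neq0$ by \cref{eq:simple}, so the division is legitimate.

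Thus $f\ell-f(\alpha)\ell\in\mu\,\R[z]$. Two polynomials that differ by a multiple of $\mu$ have the same remainder, by uniqueness of Euclidean division; this is the same fact used in the proof of \cref{lem:rho}(iii). Hence $\rho(f\ell)=\rho(f(\alpha)\ell)$. By linearity of $\rho$ and \cref{lem:rho}(i), using $\ell\in\PP$, this equals $f(\alpha)\ell$.

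There is no real obstacle. The only points to check are that $\ell$ genuinely has degree at most $m-1$, which holds because $\deg\mu_1=m-1$, and that $\mu_1(\alpha)\neq0$; both were established just before the statement.
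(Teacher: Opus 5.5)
Your proposal is correct and matches the paper's proof: both factor $f-f(\alpha)=(z-\alpha)h$, use $(z-\alpha)\ell=\mu/\mu_1(\alpha)$ to get $f\ell-f(\alpha)\ell\in\mu\,\R[z]$, and conclude with \cref{lem:rho}(i) since $\ell\in\PP$.
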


\begin{proof}
We have $f(z)-f(\alpha)=(z-\alpha)h(z)$ for some $h\in\R[z]$. Hence, by \cref{eq:lagrange},
\[
f(z)\ell(z)-f(\alpha)\ell(z)=(z-\alpha)\ell(z)\,h(z)=\frac{\mu(z)h(z)}{\mu_1(\alpha)}\in\mu\,\R[z],
\]
Therefore, $\rho(f\ell)=\rho\bigl(f(\alpha)\ell\bigr)=f(\alpha)\ell$, where the last equality holds because $f(\alpha)\ell\in\PP$ (\cref{lem:rho}(i)).
\end{proof}

\subsection{Positive Rational Basis}

Our construction requires a rational basis $(p_1,\dots,p_m) \in \big(\PQ \big)^m$ of $\PP$ in which both the values $p_k(\alpha)$ of the basis polynomials are strictly positive. Moreover, the coordinates of the Lagrange interpolation polynomial~$\ell$ in this basis are strictly positive.
These two families of numbers become the strictly positive null vectors $\tilde x$ and $\tilde y$ sought above (the fully mixed optimal strategies).
A generic rational basis achieves neither positivity property.
The following lemma produces one that achieves both, by perturbing $\ell$ and using density of the rational bases.

\begin{lemma}\label{lem:basis}
There exist $p_1,\dots,p_m\in\PQ$ and real numbers $\tilde y_1,\dots,\tilde y_m$ such that:
\begin{enumerate}
\item[\emph{(i)}] $(p_1,\dots,p_m)$ is a basis of $\PP$ over $\R$ (hence of $\PQ$ over $\Q$);
\item[\emph{(ii)}] $p_k(\alpha)>0$ for every $k$;
\item[\emph{(iii)}] $\ell=\sum_{k=1}^m\tilde y_kp_k$ with $\tilde y_k>0$ for every $k$.
\end{enumerate}
\end{lemma}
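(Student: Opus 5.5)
We first build a \emph{real} basis with both positivity properties, and then perturb it to a rational one, using that both properties are open conditions.

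\emph{Real basis.} Since $\ell(\alpha)=1>0$, consider
\[
p_k^0\defas \tfrac1m\,\ell+\varepsilon\,e_k,\qquad k=1,\dots,m,
\]
where $(e_1,\dots,e_m)$ is a basis of $\PP$ with $\sum_k e_k=0$. Such a family does not exist, because a basis cannot sum to zero, so this choice must be adjusted. Instead, choose any basis $(u_1,\dots,u_{m-1})$ of the hyperplane $H\defas\{u\in\PP: u(\alpha)=0\}$; it has dimension $m-1$ because evaluation at $\alpha$ is a nonzero linear functional. Set $u_m\defas-(u_1+\dots+u_{m-1})\in H$, and
\[
p_k^0\defas \tfrac1m\,\ell+\varepsilon\,u_k,\qquad k=1,\dots,m,
\]
for some $\varepsilon>0$. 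Then $p_k^0(\alpha)=\tfrac1m>0$, and $\sum_k p_k^0=\ell$, so $\ell$ has coordinates $(1,\dots,1)$. We must check that $(p_k^0)$ is a basis. Suppose $\sum_k c_kp_k^0=0$. Evaluating at $\alpha$ gives $\sum_k c_k=0$, and hence $\varepsilon\sum_k c_ku_k=0$. Writing this as $\sum_{k<m}(c_k-c_m)u_k=0$ and using the independence of $u_1,\dots,u_{m-1}$ gives $c_k=c_m$ for all $k$. Together with $\sum_k c_k=0$ this forces $c=0$. The case $m=1$ is trivial: take $p_1=1$, since then $\ell=1$.

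\emph{Rational perturbation.} Identify $\PP$ with $\R^m$ through the monomial coordinates, so that $\PQ$ corresponds to $\Q^m$, which is dense. Consider the set $U$ of $m$-tuples $(p_1,\dots,p_m)\in(\PP)^m$ satisfying three conditions:
\begin{enumerate}
\item[(a)] the determinant of the coordinate matrix is nonzero;
\item[(b)] $p_k(\alpha)>0$ for every $k$;
\item[(c)] the coordinate vector $\tilde y=P^{-1}[\ell]$ of $\ell$ is strictly positive, where $P$ is the coordinate matrix and $[\ell]$ the coordinate vector of $\ell$.
\end{enumerate}
Condition (a) is open. Condition (b) is open because evaluation is continuous. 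Condition (c) is open on the set where (a) holds, because matrix inversion is continuous there (Cramer's rule). Hence $U$ is open, and it is nonempty since it contains $(p_k^0)$. By density of $(\Q^m)^m$ in $(\R^m)^m$, $U$ contains a rational tuple $(p_1,\dots,p_m)$.

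This rational tuple satisfies (i)--(iii). A rational basis of $\PP$ over $\R$ is also a basis of $\PQ$ over $\Q$, because the coordinate matrix is invertible over $\Q$. The coordinates $\tilde y_k$ are real, and are in general irrational, since $\ell$ has coefficients in $\Q(\alpha)$.

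The main obstacle is the first step: producing a real basis in which positivity of the values at $\alpha$ and positivity of the coordinates of $\ell$ hold \emph{simultaneously}. The hyperplane trick handles it, by making $\ell$ the sum of all basis vectors while every basis vector carries the value $\tfrac1m$ at $\alpha$. The density step afterwards is routine.
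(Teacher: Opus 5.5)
Your proof is correct and follows the paper's argument essentially verbatim. The paper also takes a basis $h_1,\dots,h_{m-1}$ of the kernel of evaluation at $\alpha$, sets $u_m=-(h_1+\dots+h_{m-1})$ and $f_k=\ell+u_k$, and finishes with the same openness-plus-density argument; your $\tfrac1m\ell+\varepsilon u_k$ is only a rescaling of this, so the $\varepsilon$ plays no role, and you should delete the abandoned first attempt with $\sum_k e_k=0$.
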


\begin{proof}
Consider the set
\begin{align*}
\mathcal U\defas \Bigl\{(f_1,\dots,f_m)\in(\PP)^m:\ &(f_k)_k\text{ is a basis of }\PP\\
&\land f_k(\alpha)>0\ \forall k\\
&\land \text{the coordinates of }\ell\text{ in }(f_k)_k\text{ are all }>0\Bigr\}.
\end{align*}

We first show that $\mathcal U$ is open in $(\PP)^m$.
Indeed, let $(f_1,\dots,f_m)$ be any tuple in $(\PP)^m$ and let $F\in\R^{m\times m}$ be the matrix whose $k$-th column is the coefficient vector of $f_k$.
Note that $\mathcal U$ is constrained by the three conditions in its definition.
We check that each is an open condition on $F$, so that $\mathcal U$, their intersection, is open.
First, $(f_1,\dots,f_m)$ is a basis if and only if $\det F\neq0$, which is open because $\det$ is continuous.
Second, each value $f_k(\alpha)$ is a linear, hence continuous, function of $F$, so the requirement $f_k(\alpha)>0$ for all $k$ is open.
Third, on the open set where $F$ is invertible, the coordinates $c_1,\dots,c_m$ of the fixed polynomial $\ell$, defined by $\ell=\sum_kc_kf_k$, are given by Cramer's rule as continuous functions of $F$, so the requirement $c_k>0$ for all $k$ is open.
Therefore, $\mathcal U$ is open in $(\PP)^m$.

We now show that $\mathcal U$ is nonempty. Let
\[
\mathcal Z\defas \{f\in\PP:f(\alpha)=0\}.
\]
This is the kernel of the nonzero linear map $f\mapsto f(\alpha)$. Hence, we have
$\dim\mathcal Z=m-1$. Also $\ell\notin\mathcal Z$, since $\ell(\alpha)=1$.
Choose a basis $h_1,\dots,h_{m-1}$ of $\mathcal Z$ (an empty list if $m=1$), and set
\[
u_k\defas h_k\ (1\leq k\leq m-1),
\qquad
u_m\defas -(h_1+\dots+h_{m-1}),
\]
with the convention that the empty sum is $0$. Then $u_1,\dots,u_m\in\mathcal Z$ and
\[
u_1+\dots+u_m=0.
\]
Define $f_k\defas \ell+u_k$ for $1\leq k\leq m$. Since $u_k(\alpha) = 0$, we have
\[
f_k(\alpha)=\ell(\alpha)+u_k(\alpha)=1>0
\qquad(1\leq k\leq m).
\]
Moreover, we have
\begin{align*}
    \frac1m\sum_{k=1}^mf_k &= \frac1m\sum_{k=1}^m(\ell+u_k)\\
    &= \frac1m\sum_{k=1}^m\ell + \frac1m\sum_{k=1}^mu_k\\
    &= \ell .
\end{align*}
Thus, if $(f_1,\dots,f_m)$ is a basis, then the coordinates of $\ell$ in this basis are all equal to $\frac1m>0$.

It remains to prove that $(f_1,\dots,f_m)$ is a basis. Suppose
$\sum_{k=1}^mc_kf_k=0$. Evaluating at $\alpha$ gives $\sum_kc_k=0$. Hence
\[
0=\sum_{k=1}^mc_kf_k
=\Bigl(\sum_{k=1}^mc_k\Bigr)\ell+\sum_{k=1}^mc_ku_k
=\sum_{k=1}^mc_ku_k
=\sum_{k=1}^{m-1}(c_k-c_m)h_k .
\]
Since $h_1,\dots,h_{m-1}$ are linearly independent, $c_k=c_m$ for all
$k\leq m-1$. Together with $\sum_kc_k=0$, this gives $c_1=\dots=c_m=0$.
Therefore $(f_1,\dots,f_m)$ is a basis.
Hence $(f_1,\dots,f_m)\in\mathcal U$.

Since $(\PQ)^m$ is dense in $(\PP)^m$, the nonempty open set $\mathcal U$ contains a rational tuple $(p_1,\dots,p_m)\in(\PQ)^m$. Properties (ii) and (iii) hold by definition of $\mathcal U$. The tuple $(p_1,\dots,p_m)$ is a basis of $\PP$ over $\R$ by definition of $\mathcal U$; since the $p_k$ are rational polynomials and are linearly independent over $\R$, they are linearly independent over $\Q$, and as $\dim_\Q\PQ=m$, they form a basis of $\PQ$ over $\Q$. Thus property (i) holds, which completes the proof.
\end{proof}

\paragraph{Positive rational basis.}
From now on we fix a basis $(p_1,\dots,p_m)$ as in \cref{lem:basis} and set
\begin{equation}\label{eq:pie}
\tilde x\defas \bigl(p_1(\alpha),\dots,p_m(\alpha)\bigr)^{\top}>0,
\qquad
\tilde y\defas (\tilde y_1,\dots,\tilde y_m)^{\top}>0 .
\end{equation}
Note that $\ell=\sum_{k=1}^m\tilde y_kp_k$.

\subsection{Multiplication Matrices}

With the positive rational basis $(p_1,\dots,p_m)$ fixed, we encode multiplication by a polynomial followed by taking the remainder modulo $\mu$ in this basis.
Specifically, to each $f\in\R[z]$ we associate the matrix $M(f)$ of the multiplication-by-$f$ map on $\PP$ written in the basis.
We then present the basic properties of these matrices for every polynomial $f$ (\cref{lem:mult}).
We finally show that there exists a rational polynomial $\xi \in \PQ$ whose multiplication matrix satisfies $M(\xi)>0$, and when $\alpha>0$, $\xi$ is chosen such that also $M\bigl(z\,\xi(z)\bigr)>0$ (\cref{prop:xi}).
The matrices $A$ and $Q$ sought above are obtained from this polynomial $\xi$.

\paragraph{Multiplication matrices.}
For $f\in\R[z]$, define $M(f)\in\R^{m\times m}$ as the unique matrix satisfying
\begin{equation}\label{eq:Mdef}
\rho(f\,p_j)=\sum_{k=1}^mM(f)_{kj}\,p_k
\qquad(1\leq j\leq m).
\end{equation}
The uniqueness follows because $(p_1,\dots,p_m)$ is a basis of $\PP$.

\begin{lemma}\label{lem:mult}
The map $f\mapsto M(f)$ has the following properties. For all $f,h\in\R[z]$:
\begin{enumerate}
\item[\emph{(i)}] it is linear over $\R$, and $M(1)$ is the identity matrix;
\item[\emph{(ii)}] if $f\in\Q[z]$, then $M(f)\in\Q^{m\times m}$;
\item[\emph{(iii)}] the vector $\tilde x$ satisfies
\[
\tilde x^{\top}M(f)=f(\alpha)\,\tilde x^{\top};
\]
\item[\emph{(iv)}] the vector $\tilde y$ satisfies
\[
M(f)\,\tilde y=f(\alpha)\,\tilde y;
\]
\item[\emph{(v)}] for the interpolation polynomial $\ell$, we have
\[
M(\ell)=\tilde y\,\tilde x^{\top};
\]
in particular $M(\ell)>0$; and 
\item[\emph{(vi)}] $M(fh)=M(f)\,M(h)$.
\end{enumerate}
\end{lemma}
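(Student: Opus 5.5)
The plan is to verify each item directly from the defining relation \eqref{eq:Mdef}, using the properties of $\rho$ in \cref{lem:rho} and the absorption identity of \cref{lem:absorption}. For (i), linearity of $f\mapsto M(f)$ follows from linearity of $\rho$ and uniqueness of coordinates in the basis $(p_k)$; and $M(1)=I$ because $\rho(p_j)=p_j$ by \cref{lem:rho}(i). For (ii), if $f\in\Q[z]$ then $fp_j\in\Q[z]$, so $\rho(fp_j)\in\PQ$ by \cref{lem:rho}(iv). Since $(p_k)$ is a $\Q$-basis of $\PQ$ (\cref{lem:basis}(i)), its coordinates are rational.

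For (iii), evaluate \eqref{eq:Mdef} at $\alpha$. By \cref{lem:rho}(ii) the left side is $f(\alpha)p_j(\alpha)$, and the right side is $\sum_k M(f)_{kj}p_k(\alpha)$. This gives $\tilde x^\top M(f)=f(\alpha)\tilde x^\top$ componentwise.

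For (iv), use $\ell=\sum_j\tilde y_jp_j$ and linearity of $\rho$:
\[
\sum_k\bigl(M(f)\tilde y\bigr)_kp_k=\sum_j\tilde y_j\rho(fp_j)=\rho(f\ell)=f(\alpha)\ell=\sum_kf(\alpha)\tilde y_kp_k,
\]
where the middle equality is \cref{lem:absorption}. Comparing coordinates gives (iv).

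For (v), apply \cref{lem:absorption} with $f=p_j$ to get $\rho(\ell p_j)=p_j(\alpha)\ell=\sum_k\tilde y_k\tilde x_jp_k$. Hence $M(\ell)_{kj}=\tilde y_k\tilde x_j$, and positivity follows from $\tilde x,\tilde y>0$.

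For (vi), expand
\[
\rho(fhp_j)=\rho\bigl(f\rho(hp_j)\bigr)=\rho\Bigl(f\sum_iM(h)_{ij}p_i\Bigr)=\sum_iM(h)_{ij}\sum_kM(f)_{ki}p_k .
\]
The first equality is \cref{lem:rho}(iii) with $u=p_j$; the remaining ones use linearity and \eqref{eq:Mdef}. Reading off the $p_k$-coordinate gives $M(fh)_{kj}=(M(f)M(h))_{kj}$.

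None of these steps is a real obstacle; the only care needed is in (vi), namely invoking \cref{lem:rho}(iii) so that reducing modulo $\mu$ before multiplying by $f$ is legitimate, and keeping the index order consistent with the column convention of \eqref{eq:Mdef}.
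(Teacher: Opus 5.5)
Your proposal is correct and follows the paper's proof essentially step for step: (i)--(ii) via linearity of $\rho$, \cref{lem:rho}(i),(iv) and the $\Q$-basis property of $(p_k)$; (iii) by evaluating \cref{eq:Mdef} at $\alpha$; (iv)--(v) via \cref{lem:absorption}; and (vi) via \cref{lem:rho}(iii) with $u=p_j$ followed by linearity. Nothing is missing.
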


\begin{proof}
(i) Let $a,b\in\R$. By the linearity of $\rho$, for every $j$ we have
\[
\rho\bigl((af+bh)p_j\bigr)=a\,\rho(fp_j)+b\,\rho(hp_j).
\]
Comparing the coefficients of $p_1,\dots,p_m$ gives
$M(af+bh)=aM(f)+bM(h)$. Also, $\rho(p_j)=p_j$ by \cref{lem:rho}(i), so the $j$-th column of $M(1)$ is the $j$-th unit vector. Thus $M(1)$ is the identity matrix.

(ii) If $f\in\Q[z]$, then $fp_j\in\Q[z]$, and hence $\rho(fp_j)\in\PQ$ by \cref{lem:rho}(iv). Since $(p_1,\dots,p_m)$ is a basis of $\PQ$ over $\Q$, the coefficients of $\rho(fp_j)$ in this basis are rational. These coefficients are precisely the entries of the $j$-th column of $M(f)$.

(iii) Evaluate \cref{eq:Mdef} at $\alpha$. By \cref{lem:rho}(ii), the left-hand side is
\[
\rho(fp_j)(\alpha)=f(\alpha)p_j(\alpha)=f(\alpha)\tilde x_j .
\]
The right-hand side is
\[
\sum_kM(f)_{kj}p_k(\alpha)=\sum_kM(f)_{kj}\tilde x_k
=\bigl(\tilde x^{\top}M(f)\bigr)_j .
\]
Since this holds for every $j$, we obtain $\tilde x^{\top}M(f)=f(\alpha)\tilde x^{\top}$.

(iv) Since $\ell=\sum_{j=1}^m\tilde y_jp_j$, the linearity of $\rho$ and \cref{eq:Mdef} give
\[
\rho(f\ell)
=\sum_{j=1}^m\tilde y_j\rho(fp_j)
=\sum_{j=1}^m\tilde y_j\sum_{k=1}^mM(f)_{kj}p_k
=\sum_{k=1}^m\bigl(M(f)\tilde y\bigr)_kp_k .
\]
By \cref{lem:absorption},
\[
\rho(f\ell)=f(\alpha)\ell=\sum_{k=1}^mf(\alpha)\tilde y_kp_k .
\]
Comparing the coefficients of $p_1,\dots,p_m$ gives $M(f)\tilde y=f(\alpha)\tilde y$.

(v) Fix $j\in[m]$. By definition, the $j$-th column of $M(\ell)$ is given by the coefficients of $\rho(\ell p_j)$ in the basis $(p_k)_k$. Applying \cref{lem:absorption} to $p_j$ gives
\[
\rho(\ell p_j)=\rho(p_j\ell)=p_j(\alpha)\ell
=\tilde x_j\sum_{k=1}^m\tilde y_kp_k .
\]
Thus the $j$-th column of $M(\ell)$ is $\tilde x_j\tilde y$. Since this holds for every $j$, we have $M(\ell)=\tilde y\tilde x^{\top}$. By \cref{eq:pie}, this gives $M(\ell)>0$.

(vi) For each $j$, \cref{lem:rho}(iii) and \cref{eq:Mdef} give
\begin{align*}
\rho(fh\,p_j)
&=\rho\bigl(f\,\rho(hp_j)\bigr)\\
&=\sum_r M(h)_{rj}\rho(fp_r)\\
&=\sum_r M(h)_{rj}\sum_kM(f)_{kr}p_k\\
&=\sum_k\bigl(M(f)M(h)\bigr)_{kj}p_k .
\end{align*}
Comparing this with the definition of $M(fh)$ gives $M(fh)=M(f)M(h)$.
\end{proof}

\begin{corollary}\label{prop:xi}
There exists $\xi\in\PQ$ such that $M(\xi)>0$. If $\alpha>0$, then $\xi$ can moreover be chosen so that $M\bigl(z\,\xi(z)\bigr)>0$.
\end{corollary}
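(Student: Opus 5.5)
The idea is to approximate $\ell$ by rational polynomials and use continuity of $f\mapsto M(f)$. By \cref{lem:mult}(v), $M(\ell)=\tilde y\,\tilde x^{\top}$ is entrywise strictly positive. The map $f\mapsto M(f)$ restricted to $\PP$ is linear over $\R$ by \cref{lem:mult}(i), hence continuous on this finite-dimensional space. Therefore the set $\{f\in\PP: M(f)>0\}$ is open and contains $\ell$. Since $\PQ$ is dense in $\PP$, there is a rational $\xi\in\PQ$ in this open set, which gives $M(\xi)>0$. (Here $\ell$ itself is typically irrational, which is why an approximation step is needed at all.)

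For the case $\alpha>0$, I consider the real polynomial $z\,\ell(z)$. By \cref{lem:mult}(vi), $M(z\ell)=M(z)M(\ell)=M(z)\,\tilde y\,\tilde x^{\top}$. By \cref{lem:mult}(iv) with $f=z$, $M(z)\tilde y=\alpha\tilde y$, so
\[
M(z\ell)=\alpha\,\tilde y\,\tilde x^{\top}>0
\]
because $\alpha>0$ and $\tilde x,\tilde y>0$. An equivalent route is \cref{lem:absorption}: $\rho(z\ell p_j)=\alpha\,p_j(\alpha)\,\ell$.

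It remains to make the two conditions hold simultaneously at a rational point. The map $f\mapsto\bigl(M(f),M(zf)\bigr)$ on $\PP$ is again linear: $f\mapsto zf$ is linear into $\R[z]_{<m+1}$, and $M$ is linear there. So this map is continuous, and the set
\[
\{f\in\PP: M(f)>0 \text{ and } M(zf)>0\}
\]
is open. It contains $\ell$, so by density of $\PQ$ in $\PP$ it contains some rational $\xi$.

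No step here is a real obstacle. The only point needing care is observing that $M(z\xi)$ depends continuously on $\xi$ even though $z\xi$ may have degree $m$. This holds because $M$ is defined on all of $\R[z]$ and is linear, so its restriction to the finite-dimensional space of polynomials of degree at most $m$ is continuous. Rationality of $\xi$ is what will later make $M(\xi)$ and $M(z\xi)$ rational via \cref{lem:mult}(ii), but that is not needed for this corollary itself.
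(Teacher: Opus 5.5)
Your proposal is correct and follows the paper's proof: you use openness of $\{f\in\PP: M(f)>0\}$ (respectively, adding the condition $M(zf)>0$) from linearity of $f\mapsto M(f)$, membership of $\ell$ via $M(\ell)=\tilde y\tilde x^{\top}$ and $M(z\ell)=M(z)M(\ell)=\alpha\,\tilde y\tilde x^{\top}$, and density of $\PQ$ in $\PP$. Your extra remark on the continuity of $f\mapsto M(zf)$ despite $\deg(zf)=m$ is a fine clarification of a point the paper treats implicitly.
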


\begin{proof}
For each pair $(k,j)$, the entry $M(f)_{kj}$ depends linearly on $f\in\PP$.
Indeed, in the definition
\[
\rho(fp_j)=\sum_{r=1}^mM(f)_{rj}p_r ,
\]
multiplication by $p_j$, the remainder map $\rho$, and taking the coefficient of $p_k$ in the basis $(p_1,\dots,p_m)$ are all linear operations.
Hence each map $f\mapsto M(f)_{kj}$ is continuous.

Now set
\[
\mathcal V\defas \{f\in\PP:M(f)>0\}.
\]
Since there are only finitely many entries and each entry depends continuously on $f$, the set $\mathcal V$ is open in $\PP$.

By \cref{lem:mult}(v), we have $M(\ell)=\tilde y\tilde x^{\top}$.
Since $\tilde x,\tilde y>0$ by \cref{eq:pie}, we have $M(\ell)>0$. Hence, we have $\ell\in\mathcal V$.
Since $\PQ$ is dense in $\PP$, the nonempty open set $\mathcal V$ contains some $\xi\in\PQ$.
For this choice of $\xi$, we have $M(\xi)>0$.

Assume now that $\alpha>0$. We claim that the rational polynomial $\xi$ can be chosen so that also $M\bigl(z\,\xi(z)\bigr)>0$.
Set
\[
\mathcal V_+\defas \{f\in\PP:M(f)>0\ \text{and}\ M\bigl(z\,f(z)\bigr)>0\}.
\]
As above, the entries of both $M(f)$ and $M\bigl(z\,f(z)\bigr)$ depend continuously on $f$, so $\mathcal V_+$ is open in $\PP$.
Moreover, $\ell\in\mathcal V_+$: we already know that $M(\ell)>0$, and by \cref{lem:mult}(vi), (iv), and (v),
\[
M(z\ell)=M(z)M(\ell)
=\bigl(M(z)\tilde y\bigr)\tilde x^{\top}
=\alpha\,\tilde y\tilde x^{\top}>0 .
\]
Therefore, by density of $\PQ$ in $\PP$, the nonempty open set $\mathcal V_+$ contains some $\xi\in\PQ$.
For this choice of $\xi$, both $M(\xi)>0$ and $M\bigl(z\,\xi(z)\bigr)>0$, as claimed.
\end{proof}

\subsection{\texorpdfstring{Matrices $A$ and $Q$ and Their Positive Null Vectors}{Matrices A and Q and Their Positive Null Vectors}}

With the rational polynomial $\xi\in\PQ$ of \cref{prop:xi} fixed, we form the two matrices sought at the start of the section.
We take $Q$ and $A$ to be the multiplication matrices of $\xi$ and of $z\,\xi(z)$, both rational and with $Q>0$. Moreover, when $\alpha>0$, the choice of $\xi$ also gives $A>0$.
Evaluating the identities $\tilde x^{\top}M(f)=f(\alpha)\tilde x^{\top}$ and $M(f)\tilde y=f(\alpha)\tilde y$ at $f=\xi$ and $f=z\,\xi(z)$ gives $\tilde x^{\top}(A-\alpha Q)=0$ and $(A-\alpha Q)\tilde y=0$ (\cref{thm:nullvectors}).
Thus, $\tilde x$ and $\tilde y$ are strictly positive left and right null vectors of $A-\alpha Q$, as required.

\paragraph{Matrices $A$ and $Q$.} Fix the rational polynomial $\xi\in\PQ$ as in \cref{prop:xi} and define
\begin{equation}\label{eq:AQ}
Q\defas M(\xi)\in\Q^{m\times m},\qquad
A\defas M\bigl(z\,\xi(z)\bigr)\in\Q^{m\times m} .
\end{equation}
Indeed $z\,\xi(z)\in\Q[z]$, so $A$ is rational by \cref{lem:mult}(ii), and $Q$ is rational with $Q>0$. If $\alpha>0$, then $A>0$ by the choice of $\xi$ in \cref{prop:xi}.

\begin{lemma}\label{thm:nullvectors}
With $\tilde x,\tilde y>0$ as in \cref{eq:pie} and $A,Q$ as in \cref{eq:AQ},
\[
\tilde x^{\top}(A-\alpha Q)=0
\qquad\text{and}\qquad
(A-\alpha Q)\,\tilde y=0 .
\]
\end{lemma}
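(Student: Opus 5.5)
The plan is to apply the eigenvector identities of \cref{lem:mult}(iii) and (iv) directly, once with $f=\xi$ and once with $f=z\,\xi(z)$, and then subtract, using linearity of vector--matrix multiplication. No further construction is needed; the work was already done in \cref{lem:mult}.

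\emph{Left null vector.} By \cref{lem:mult}(iii) with $f=\xi$, we get $\tilde x^{\top}Q=\tilde x^{\top}M(\xi)=\xi(\alpha)\,\tilde x^{\top}$. With $f=z\,\xi(z)$, whose value at $\alpha$ is $\alpha\,\xi(\alpha)$, we get $\tilde x^{\top}A=\alpha\,\xi(\alpha)\,\tilde x^{\top}$. Hence
\[
\tilde x^{\top}(A-\alpha Q)=\alpha\,\xi(\alpha)\,\tilde x^{\top}-\alpha\,\xi(\alpha)\,\tilde x^{\top}=0 .
\]

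\emph{Right null vector.} Symmetrically, \cref{lem:mult}(iv) gives $Q\tilde y=\xi(\alpha)\tilde y$ and $A\tilde y=\alpha\,\xi(\alpha)\tilde y$, so $(A-\alpha Q)\tilde y=0$.

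There is essentially no obstacle. The only point to record is that the polynomial $z\,\xi(z)$ has degree up to $m$ and so lies outside $\PP$. This is harmless, because \cref{lem:mult} holds for every $f\in\R[z]$ and the definition \cref{eq:Mdef} passes through the remainder map $\rho$. Alternatively, one could write $A=M(z)Q$ by \cref{lem:mult}(vi) and apply the eigen-relations for $M(z)$ with eigenvalue $\alpha$, but the direct evaluation above is simpler.
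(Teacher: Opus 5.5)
Your proposal is correct and is the paper's proof: apply \cref{lem:mult}(iii) and (iv) to $f=\xi$ and to $f=z\,\xi(z)$, then subtract. Your remark that $z\,\xi(z)$ need not lie in $\PP$ is accurate and harmless, since \cref{lem:mult} holds for every $f\in\R[z]$.
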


\begin{proof}
By \cref{lem:mult}(iii) applied to $f(z)=z\,\xi(z)$ and to $f=\xi$, we have
\[
\tilde x^{\top}A=\alpha\,\xi(\alpha)\,\tilde x^{\top},
\qquad
\tilde x^{\top}Q=\xi(\alpha)\,\tilde x^{\top}.
\]
Therefore, $\tilde x^{\top}(A-\alpha Q)=\bigl(\alpha\xi(\alpha)-\alpha\,\xi(\alpha)\bigr)\tilde x^{\top}=0$. Likewise, by \cref{lem:mult}(iv),
$A\,\tilde y=\alpha\,\xi(\alpha)\,\tilde y$ and $Q\,\tilde y=\xi(\alpha)\,\tilde y$. Thus, $(A-\alpha Q)\tilde y=0$.
\end{proof}

\section{Proof of Main Result}\label{sec:proof}

\begin{theorem}[Main theorem]\label{thm:main}
Let $m\geq1$ and let $\alpha\in\R$ be an algebraic number of degree $m$ over $\Q$. Let $\tilde x,\tilde y,A,Q$ be as in \cref{sec:algebra}, let $N\in\N^*$ satisfy $N\geq\max_{i,j}Q_{ij}$, and define the $m\times m$ absorbing game $\Gamma_\alpha=(g,q,w)$ by
\[
g_{ij}\defas 0,
\qquad
q_{ij}\defas \frac{Q_{ij}}{N}\in(0,1]\cap\Q,
\qquad
w_{ij}\defas \frac{A_{ij}}{Q_{ij}}\in\Q
\qquad
\text{for all }(i,j)\in \I\times\J .
\]
Then $\Gamma_\alpha$ is a rational, strictly absorbing, and recursive $m\times m$ absorbing game, and its undiscounted value equals $\alpha$. If $\alpha>0$, then $\Gamma_\alpha$ is moreover positive recursive. In particular, \cref{conj:OV} holds.
\end{theorem}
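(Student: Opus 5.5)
The plan is to verify the well-definedness and class membership of $\Gamma_\alpha$ directly from the properties of $A$ and $Q$ established in \cref{sec:algebra}. Then we identify its undiscounted value via \cref{lem:uniformlimit}, using the positive null vectors of \cref{thm:nullvectors} as certificates.

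\emph{Membership in the class.} Since $Q=M(\xi)$ is rational with $Q>0$ by \cref{prop:xi}, and $N\in\N^*$ satisfies $N\geq\max_{ij}Q_{ij}$, each $q_{ij}=Q_{ij}/N$ is a rational number in $(0,1]$. Hence $\Gamma_\alpha$ is strictly absorbing. Since $Q_{ij}\neq0$, the ratio $w_{ij}=A_{ij}/Q_{ij}$ is well defined, and it is rational because $A\in\Q^{m\times m}$ by \cref{eq:AQ}. The stage payoffs $g_{ij}=0$ are rational, so the game is rational and recursive. If $\alpha>0$, then \cref{prop:xi} gives $A>0$, hence $w_{ij}>0$ for all $(i,j)$, and the game is positive recursive.

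\emph{Computing the value.} The limit auxiliary matrix is
\[
W_0(z)_{ij}=q_{ij}(w_{ij}-z)=\tfrac1N\bigl(A_{ij}-zQ_{ij}\bigr),
\qquad\text{so}\qquad W_0(z)=\tfrac1N(A-zQ).
\]
Set $x^*\defas\tilde x/(\mathbf 1^\top\tilde x)$ and $y^*\defas\tilde y/(\mathbf 1^\top\tilde y)$; these are probability vectors because $\tilde x,\tilde y>0$ by \cref{eq:pie}. By \cref{thm:nullvectors}, $x^{*\top}W_0(\alpha)=0$ and $W_0(\alpha)y^*=0$. \cref{lem:matrixgames}(v) then gives $\val W_0(\alpha)=0$. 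By \cref{lem:uniformlimit}(i), the map $z\mapsto\val W_0(z)$ has a unique zero, so $z^*=\alpha$. By \cref{lem:uniformlimit}(ii), $v_\lambda\to\alpha$, and the undiscounted value of $\Gamma_\alpha$ is $\alpha$. Since $\Gamma_\alpha$ is $m\times m$, \cref{conj:OV} follows.

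\emph{Main obstacle.} There is essentially none at this stage: the hard work (finding the positive basis and $\xi$) was done in \cref{lem:basis} and \cref{prop:xi}. The only points needing care are bookkeeping. First, $Q>0$ is what makes the $w_{ij}$ well defined and the $q_{ij}$ positive. Second, strict absorption is exactly the hypothesis under which \cref{lem:uniformlimit} applies.
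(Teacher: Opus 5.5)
Your proposal is correct and follows the same route as the paper: class membership from the rationality and positivity of $Q$ (and of $A$ when $\alpha>0$), then $W_0(z)=\tfrac1N(A-zQ)$, the normalized positive null vectors of \cref{thm:nullvectors} as certificates via \cref{lem:matrixgames}(v), and \cref{lem:uniformlimit} to identify the undiscounted value as the unique zero $\alpha$. Your explicit remark that $Q>0$ is what makes each $w_{ij}$ well defined is a small but welcome addition.
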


\begin{proof}
The game is rational because $A$ and $Q$ are rational. It is strictly absorbing because, for every $(i,j)$,
\[
q_{ij}=\frac{Q_{ij}}{N}>0 ,
\]
and $q_{ij}\leq1$ by the choice of $N$. It is recursive because $g_{ij}=0$ for every $(i,j)$.
If $\alpha>0$, then $A>0$ by the choice of $\xi$ in \cref{prop:xi}. Since also $Q>0$, we have
\[
w_{ij}=\frac{A_{ij}}{Q_{ij}}>0
\qquad\text{for every }(i,j),
\]
so the recursive game $\Gamma_\alpha$ is positive recursive.

For $z\in\R$, the limit auxiliary matrix defined in \cref{eq:W} is
\[
W_0(z)_{ij}=q_{ij}(w_{ij}-z)=\frac1N\bigl(A_{ij}-zQ_{ij}\bigr),
\]
and therefore
\[
W_0(z)=\frac1N(A-zQ).
\]
Since $\tilde x,\tilde y>0$, define
\[
x^*\defas \frac{\tilde x}{\sum_k\tilde x_k}\in\Delta(\I),
\qquad
y^*\defas \frac{\tilde y}{\sum_k\tilde y_k}\in\Delta(\J).
\]
By \cref{thm:nullvectors},
\[
x^{*\top}W_0(\alpha)=\frac{1}{N\sum_k\tilde x_k}\,\tilde x^{\top}(A-\alpha Q)=0,
\qquad
W_0(\alpha)\,y^*=\frac{1}{N\sum_k\tilde y_k}\,(A-\alpha Q)\,\tilde y=0 .
\]
Thus \cref{lem:matrixgames}(v) gives $\val W_0(\alpha)=0$.
Since the game is strictly absorbing, \cref{lem:uniformlimit} implies that $\alpha$ is the undiscounted value of $\Gamma_\alpha$.

The game has $m$ actions for each player and $\deg\alpha=m$, so this proves \cref{conj:OV} for the given real algebraic number $\alpha$, which completes the proof.
\end{proof}

\cref{thm:main}, action duplication, and the degree bound of \citet[Proposition 1]{OB21} imply the following result.

\begin{corollary}\label{cor:char}
For every $m\geq1$,
\begin{multline*}
\bigl\{\text{undiscounted values of rational $m\times m$ absorbing games}\bigr\}\\
=\bigl\{\alpha\in\R:\alpha\text{ algebraic of degree at most }m\bigr\}.
\end{multline*}
\end{corollary}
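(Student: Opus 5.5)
The proof splits into the two inclusions.

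For ``$\subseteq$'', I will invoke the degree bound of \citet[Proposition~1]{OB21}. It says that the undiscounted value of any rational $m\times m$ absorbing game is a real algebraic number of degree at most $m$. It is real because $v=\lim_{\lambda\to0}v_\lambda$ is a limit of reals, and the bound applies to arbitrary rational absorbing games, not only strictly absorbing ones. So this inclusion is immediate once the cited result is granted.

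For ``$\supseteq$'', fix a real algebraic $\alpha$ of degree $d\le m$. By \cref{thm:main} there is a rational $d\times d$ absorbing game $\Gamma_\alpha=(g,q,w)$ with undiscounted value $\alpha$. I will pad it to an $m\times m$ game by duplicating actions. Define $\sigma\colon[m]\to[d]$ by $\sigma(i)=\min(i,d)$, so that row $d$ is repeated for the extra rows, and do the same for columns. Set $g'_{ij}=g_{\sigma(i)\sigma(j)}$, and define $q'$ and $w'$ similarly. The resulting game is still rational and, incidentally, still halting and recursive.

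The key step is to check that the value is unchanged. I will do this directly through the discounted values, using \cref{lem:shapleypencil}: $v'_\lambda$ is the unique zero of $z\mapsto\val W'_\lambda(z)$. The matrix $W'_\lambda(z)$ is obtained from $W_\lambda(z)$ by repeating rows and columns. The value of a matrix game is invariant under duplicating a row or a column. For the maximizer, any mixed action on the enlarged set of rows projects onto $[d]$ by summing the weights of copies, with identical payoffs against every column, and conversely any mixed action on $[d]$ embeds into the enlarged set. The minimizer's side is symmetric. Hence $\val W'_\lambda(z)=\val W_\lambda(z)$ for all $z$, so $v'_\lambda=v_\lambda$ for every $\lambda$, and the undiscounted values agree: $v'=\lim_{\lambda\to 0}v'_\lambda=\lim_{\lambda\to 0}v_\lambda=\alpha$. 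Since $\Gamma_\alpha$ is strictly absorbing, one could alternatively apply \cref{lem:uniformlimit} to $W'_0$, which again has the same value as $W_0$. Either way this shows every such $\alpha$ lies in the left-hand set.

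I expect no real obstacle. The only care needed is to state the duplication invariance precisely via the projection and embedding of mixed actions, and to handle the degenerate case $d=m$, where no padding is needed.
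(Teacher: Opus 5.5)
Your proposal is correct and follows the paper's proof: the inclusion $\subseteq$ is \citet[Proposition~1]{OB21}, and $\supseteq$ comes from \cref{thm:main} at degree $d\le m$ followed by duplicating actions. The one minor difference is how you show that duplication preserves $v_\lambda$: you use \cref{lem:shapleypencil} and the invariance of matrix-game values under repeated rows and columns of $W_\lambda(z)$, while the paper projects behavior strategies history by history in the stochastic game itself; both arguments are valid.
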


\begin{proof}
The inclusion from left to right is \citet[Proposition 1]{OB21}. Conversely, let $\alpha\in\R$ be algebraic of degree $k\leq m$. By \cref{thm:main}, $\alpha$ is the undiscounted value of a rational $k\times k$ absorbing game. If $k<m$, duplicate arbitrary actions of each player until both players have $m$ actions. This does not change any discounted value: applying, at every history, the projection that sums the probabilities assigned to duplicate actions maps strategies in the enlarged game to strategies in the original game with the same induced payoff and transition law, and strategies in the original game lift to the enlarged game by assigning duplicate actions probability zero. Hence the undiscounted value is unchanged, so $\alpha$ is the undiscounted value of a rational $m\times m$ absorbing game.
\end{proof}

\section{Explicit Examples}\label{sec:example}

We illustrate the construction on two cubic examples. For $\alpha=\sqrt[3]3$ the data produced are highly symmetric; the second example, the root of $z^3-z-1$, is less symmetric.

\subsection{The Cube Root of Three}

Let $\alpha=\sqrt[3]3\in(1,2)$ be the real root of
\[
\mu(z)=z^3-3 .
\]
The polynomial $\mu$ has no rational root, so it is the minimal polynomial of $\alpha$ and $m=3$.

\paragraph{Positive rational basis.}
The power basis already has the required positivity. Take
\[
p_1(z)=1,\qquad p_2(z)=z,\qquad p_3(z)=z^2,
\]
so that
\[
\tilde x=\begin{pmatrix}p_1(\alpha)\\ p_2(\alpha)\\ p_3(\alpha)\end{pmatrix}
=\begin{pmatrix}1\\ \alpha\\ \alpha^2\end{pmatrix}>0 .
\]
Since $\mu(z)=(z-\alpha)(z^2+\alpha z+\alpha^2)$ and $\frac{d}{dz}\mu(\alpha)=3\alpha^2$, the interpolation polynomial is
\[
\ell(z)=\frac{z^2+\alpha z+\alpha^2}{3\alpha^2},
\]
whose coordinates in the basis $(1,z,z^2)$ are positive for every $\alpha>0$:
\[
\ell=\tilde y_1p_1+\tilde y_2p_2+\tilde y_3p_3,
\qquad
\tilde y=\frac1{3\alpha^2}
\begin{pmatrix}
\alpha^2\\[2pt]
\alpha\\[2pt]
1
\end{pmatrix}>0 .
\]

\paragraph{Multiplication matrices.}
Modulo $\mu$, we have
\[
z^3\equiv 3,
\qquad
z^4\equiv 3z .
\]
Hence, if $f(z)=a_0+a_1z+a_2z^2$, then
\[
M(f)=
\begin{pmatrix}
a_0 & 3a_2 & 3a_1\\
a_1 & a_0 & 3a_2\\
a_2 & a_1 & a_0
\end{pmatrix}.
\]
Choose
\[
\xi(z)=1+z+z^2\in\PQ .
\]
Then
\[
Q=M(\xi)=
\begin{pmatrix}
1&3&3\\
1&1&3\\
1&1&1
\end{pmatrix}>0 .
\]
Also,
\[
z\,\xi(z)=z+z^2+z^3\equiv 3+z+z^2\pmod{\mu},
\]
and therefore
\[
A=M\bigl(z\,\xi(z)\bigr)=
\begin{pmatrix}
3&3&3\\
1&3&3\\
1&1&3
\end{pmatrix}>0 .
\]

\paragraph{The absorbing game.}
Since all entries of $Q$ are at most $3$, we may take $N=4$ in \cref{thm:main}. Thus
\[
g_{ij}=0,
\qquad
q_{ij}=\frac{Q_{ij}}4,
\qquad
w_{ij}=\frac{A_{ij}}{Q_{ij}} ,
\]
that is,
\[
q=
\begin{pmatrix}
\frac14&\frac34&\frac34\\[2pt]
\frac14&\frac14&\frac34\\[2pt]
\frac14&\frac14&\frac14
\end{pmatrix},
\qquad
w=
\begin{pmatrix}
3&1&1\\[2pt]
1&3&1\\[2pt]
1&1&3
\end{pmatrix}.
\]
In the notation of \citet{OV23}, where an entry $\bigl(g;(q,w^*)\bigr)$ lists the stage payoff, the absorption probability and the absorbing payoff, the resulting game is
\[
\Gamma_{\alpha}\ =\
\begin{pmatrix}
\bigl(0;(\tfrac14,3^*)\bigr) & \bigl(0;(\tfrac34,1^*)\bigr) & \bigl(0;(\tfrac34,1^*)\bigr)\\[6pt]
\bigl(0;(\tfrac14,1^*)\bigr) & \bigl(0;(\tfrac14,3^*)\bigr) & \bigl(0;(\tfrac34,1^*)\bigr)\\[6pt]
\bigl(0;(\tfrac14,1^*)\bigr) & \bigl(0;(\tfrac14,1^*)\bigr) & \bigl(0;(\tfrac14,3^*)\bigr)
\end{pmatrix}.
\]
This game is positive recursive: all non-absorbing stage payoffs are zero, and all absorbing payoffs are positive.

\paragraph{Verification of the value.}
For this game,
\[
W_0(z)=\frac14(A-zQ),
\qquad
A-\alpha Q=
\begin{pmatrix}
3-\alpha&3-3\alpha&3-3\alpha\\
1-\alpha&3-\alpha&3-3\alpha\\
1-\alpha&1-\alpha&3-\alpha
\end{pmatrix}.
\]
Using $\alpha^3=3$, the left identity is
\[
\tilde x^{\top}(A-\alpha Q)=\bigl(3-\alpha^3\bigr)\begin{pmatrix}1&1&1\end{pmatrix}=0 .
\]
For the right identity, with $\hat y\defas(\alpha^2,\alpha,1)^{\top}$ and $\tilde y=(3\alpha^2)^{-1}\hat y$,
\[
(A-\alpha Q)\hat y=
\begin{pmatrix}
(3-\alpha)\alpha^2+(3-3\alpha)\alpha+(3-3\alpha)\\
(1-\alpha)\alpha^2+(3-\alpha)\alpha+(3-3\alpha)\\
(1-\alpha)\alpha^2+(1-\alpha)\alpha+(3-\alpha)
\end{pmatrix}
=\bigl(3-\alpha^3\bigr)\begin{pmatrix}1\\1\\1\end{pmatrix}=0 .
\]
Normalizing $\tilde x$ and $\tilde y$ to probability vectors
\[
x^*\defas \frac{\tilde x}{1+\alpha+\alpha^2},
\qquad
y^*\defas \frac{\tilde y}{\tilde y_1+\tilde y_2+\tilde y_3},
\]
gives $x^{*\top}(A-\alpha Q)=0$ and $(A-\alpha Q)y^*=0$. Hence, by \cref{lem:matrixgames}(v), $\val(A-\alpha Q)=0$, and therefore $\val W_0(\alpha)=0$. Since the game is strictly absorbing, \cref{lem:uniformlimit} implies that the undiscounted value of $\Gamma_\alpha$ is $\alpha=\sqrt[3]3$. Finally, the determinant
\[
\det(A-tQ)=-4\,(t^3-3)
\]
recovers the minimal polynomial $t^3-3$.

\subsection{\texorpdfstring{A Less Symmetric Cubic: $\alpha^3-\alpha-1=0$}{A Less Symmetric Cubic: alpha cubed minus alpha minus 1 equals 0}}

We now turn to the root $\alpha\in(1,2)$ of
\[
\mu(z)=z^3-z-1 .
\]
Such a root exists because $\mu(1)<0$ and $\mu(2)>0$. The polynomial $\mu$ has no rational root, so it is the minimal polynomial of $\alpha$ and $m=3$.

\paragraph{Positive rational basis.}
For this example, the usual power basis already has the required positivity. Take
\[
p_1(z)=1,\qquad p_2(z)=z,\qquad p_3(z)=z^2 .
\]
Then
\[
\tilde x=\begin{pmatrix}p_1(\alpha)\\ p_2(\alpha)\\ p_3(\alpha)\end{pmatrix}
=\begin{pmatrix}1\\ \alpha\\ \alpha^2\end{pmatrix}>0 .
\]
Moreover,
\[
\mu(z)=(z-\alpha)(z^2+\alpha z+\alpha^2-1),
\qquad
\frac{d}{dz}\mu(\alpha)=3\alpha^2-1 .
\]
Thus the interpolation polynomial is
\[
\ell(z)=\frac{z^2+\alpha z+\alpha^2-1}{3\alpha^2-1}.
\]
Since $\alpha>1$, all coefficients of $\ell$ in the basis $(1,z,z^2)$ are positive:
\[
\ell=\tilde y_1p_1+\tilde y_2p_2+\tilde y_3p_3,
\qquad
\tilde y=\frac1{3\alpha^2-1}
\begin{pmatrix}
\alpha^2-1\\[2pt]
\alpha\\[2pt]
1
\end{pmatrix}>0 .
\]

\paragraph{Multiplication matrices.}
Modulo $\mu$, we have
\[
z^3\equiv z+1,
\qquad
z^4\equiv z^2+z .
\]
Hence, if $f(z)=a_0+a_1z+a_2z^2$, then
\[
M(f)=
\begin{pmatrix}
a_0 & a_2 & a_1\\
a_1 & a_0+a_2 & a_1+a_2\\
a_2 & a_1 & a_0+a_2
\end{pmatrix}.
\]
Indeed, the three columns are the coordinates of $\rho(f)$, $\rho(zf)$ and $\rho(z^2f)$ in the basis $(1,z,z^2)$.

Choose
\[
\xi(z)=1+z+z^2\in\PQ .
\]
Then
\[
Q=M(\xi)=
\begin{pmatrix}
1&1&1\\
1&2&2\\
1&1&2
\end{pmatrix}>0 .
\]
Also,
\[
z\,\xi(z)=z+z^2+z^3\equiv 1+2z+z^2\pmod{\mu},
\]
and therefore
\[
A=M\bigl(z\,\xi(z)\bigr)=
\begin{pmatrix}
1&1&2\\
2&2&3\\
1&2&2
\end{pmatrix}>0 .
\]

\paragraph{The absorbing game.}
Since all entries of $Q$ are at most $2$, we may take $N=3$ in \cref{thm:main}. Thus
\[
g_{ij}=0,
\qquad
q_{ij}=\frac{Q_{ij}}3,
\qquad
w_{ij}=\frac{A_{ij}}{Q_{ij}} .
\]
Explicitly,
\[
q=
\begin{pmatrix}
\frac13&\frac13&\frac13\\[2pt]
\frac13&\frac23&\frac23\\[2pt]
\frac13&\frac13&\frac23
\end{pmatrix},
\qquad
w=
\begin{pmatrix}
1&1&2\\[2pt]
2&1&\frac32\\[2pt]
1&2&1
\end{pmatrix}.
\]
In the same notation, the resulting game is
\[
\Gamma_{\alpha}\ =\
\begin{pmatrix}
\bigl(0;(\tfrac13,1^*)\bigr) & \bigl(0;(\tfrac13,1^*)\bigr) & \bigl(0;(\tfrac13,2^*)\bigr)\\[6pt]
\bigl(0;(\tfrac13,2^*)\bigr) & \bigl(0;(\tfrac23,1^*)\bigr) & \bigl(0;(\tfrac23,(\tfrac32)^*)\bigr)\\[6pt]
\bigl(0;(\tfrac13,1^*)\bigr) & \bigl(0;(\tfrac13,2^*)\bigr) & \bigl(0;(\tfrac23,1^*)\bigr)
\end{pmatrix}.
\]
This game is positive recursive: all non-absorbing stage payoffs are zero, and all absorbing payoffs are positive.

\paragraph{Verification of the value.}
For this game,
\[
W_0(z)=\frac13(A-zQ).
\]
We verify directly that $\alpha$ is the zero selected by the construction. First,
\[
A-\alpha Q=
\begin{pmatrix}
1-\alpha&1-\alpha&2-\alpha\\
2-\alpha&2-2\alpha&3-2\alpha\\
1-\alpha&2-\alpha&2-2\alpha
\end{pmatrix}.
\]
Using $\alpha^3=\alpha+1$, we get
\[
\tilde x^{\top}(A-\alpha Q)
=\begin{pmatrix}
1+\alpha-\alpha^3&
1+\alpha-\alpha^3&
2+2\alpha-2\alpha^3
\end{pmatrix}
=0 .
\]
For the right identity, it is enough to use the vector
\[
\hat y\defas
\begin{pmatrix}
\alpha^2-1\\ \alpha\\ 1
\end{pmatrix},
\]
since $\tilde y=(3\alpha^2-1)^{-1}\hat y$. Again using $\alpha^3=\alpha+1$, we have
\[
(A-\alpha Q)\hat y=
\begin{pmatrix}
(1-\alpha)(\alpha^2-1)+(1-\alpha)\alpha+(2-\alpha)\\
(2-\alpha)(\alpha^2-1)+(2-2\alpha)\alpha+(3-2\alpha)\\
(1-\alpha)(\alpha^2-1)+(2-\alpha)\alpha+(2-2\alpha)
\end{pmatrix}
=0 .
\]
Let
\[
x^*\defas \frac{\tilde x}{1+\alpha+\alpha^2},
\qquad
y^*\defas \frac{\tilde y}{\tilde y_1+\tilde y_2+\tilde y_3}.
\]
The two identities above give
\[
x^{*\top}(A-\alpha Q)=0,
\qquad
(A-\alpha Q)y^*=0 .
\]
Hence, by \cref{lem:matrixgames}(v), $\val(A-\alpha Q)=0$, and therefore $\val W_0(\alpha)=0$. Since the game is strictly absorbing, \cref{lem:uniformlimit} implies that the undiscounted value of $\Gamma_\alpha$ is $\alpha$.

As in the previous example, the determinant calculation is not needed for the value, but one also has
\[
\det(A-tQ)=-(t^3-t-1),
\]
so the same polynomial $t^3-t-1$ appears in the determinant.

\section{Discussion}

\paragraph{Expressive power versus computational complexity.}
\cref{cor:char} measures the \emph{expressive power} of absorbing games: the undiscounted values of rational $m\times m$ absorbing games are exactly the real algebraic numbers of degree at most $m$. As $m$ grows, these values become arbitrarily rich, comprising irrational numbers of every algebraic degree.
This expressiveness does not come at a computational cost. By \citet{OB21}, the undiscounted value of a stochastic game is computed exactly in time polynomial in the number of pure stationary strategies of the two players. An $m\times n$ absorbing game has a single non-absorbing state, so its pure stationary strategies number $m$ and $n$, respectively; hence its undiscounted value, though possibly an irrational algebraic number of degree as large as $\min(m, n)$, is computed in time polynomial in $m$ and $n$. In contrast, for a general stochastic game the number of pure stationary strategies is exponential in the number of states, and whether the value is computable in polynomial time is a long-standing open problem.

\paragraph{Stochastic versus deterministic transitions.}
Our games use genuinely stochastic transitions. Choosing $N>\max_{ij}Q_{ij}$ in \cref{thm:main} makes every $q_{ij}\in(0,1)$, so each algebraic value is realized by a halting, recursive game in which absorption is probabilistic at every action pair. This randomness is essential. If a halting game is instead \emph{deterministic}, with $q_{ij}\in\{0,1\}$, then strict absorption forces $q_{ij}=1$ for every $(i,j)$, i.e., play absorbs after a single stage, and the game reduces to the one-shot matrix game with payoff matrix $w$, whose value is rational whenever the game is rational. Thus halting deterministic games realize only rational values, whereas \cref{thm:main} realizes every real algebraic number as the value of a halting recursive game with stochastic transitions.
The deterministic case is more delicate and remains largely open. By \citet[Theorem~1]{OV23}, deterministic absorbing games with $\min(m,n)<3$ obey the orderfield property, yet \citet{OV23} also exhibit a deterministic $3\times3$ game whose value is irrational, so determinism neither forces rationality nor is understood in general. Characterizing the values realized by deterministic absorbing games, and in particular by deterministic \emph{recursive} games, in which payoff accrues only upon absorption, is a natural question left open by our work.

\paragraph{Beyond minimal polynomials.}
The construction of \crefrange{sec:algebra}{sec:proof} and its conclusion that the undiscounted value equals $\alpha$ remain valid whenever $\mu\in\Q[z]$ is a monic polynomial of degree $m$ and $\alpha$ is a \emph{simple real root} of $\mu$ (the simplicity used in \cref{eq:simple}): every simple real root of a rational polynomial of degree $m$ is the undiscounted value of a rational $m\times m$ strictly absorbing and recursive game, which is positive recursive when $\alpha>0$. By \cref{cor:char} this yields no new values, but it shows that the construction depends neither on irreducibility nor on any field-theoretic property of $\Q[z]/(\mu)$, only on $\alpha$ being a simple root.

\paragraph{Explicit optimal strategies.}
The optimal stationary strategies of $\Gamma_\alpha$ for the undiscounted value are completely explicit and have a pleasant interpretation in terms of the arithmetic of $\alpha$: Player~1 plays the actions $k\in\I$ proportionally to the values $p_k(\alpha)$ of the basis polynomials at $\alpha$, and Player~2 plays the actions $k\in\J$ proportionally to the coordinates of the Lagrange interpolation polynomial $\ell$ in that basis. 
Indeed, let $x^*$ be the stationary strategy of Player~1 constructed in the proof of \cref{thm:main}, and let $\tau$ be any strategy of Player~2. Since $x^{*\top}W_0(\alpha)=0$, at every history before absorption the conditional expectation of $q_{IJ}(w_{IJ}-\alpha)$ is zero. Thus, writing $T$ for the absorption stage, $\E_{x^*,\tau}[(w(I_T,J_T)-\alpha)\mathbf 1_{\{T=\ell\}}]=0$ for every $\ell\geq1$. Strict absorption implies $T<\infty$ almost surely, and because $g=0$ the long-run average payoff equals $w(I_T,J_T)$; hence $\gamma(x^*,\tau)=\alpha$. The argument for the stationary strategy $y^*$ of Player~2, using $W_0(\alpha)y^*=0$, is symmetric.

\ACKNOWLEDGMENT{%
We thank Miquel Oliu-Barton and Guillaume Vigeral for bringing this problem to our attention and for helpful discussions at an early stage of this work.
The research was partially supported by the Austrian Science Fund (FWF) 10.55776/COE12 and by the ERC CoG 863818 (ForM-SMArt) grant.%
}

\bibliographystyle{plainnat}
\bibliography{references}

@inproceedings{AHK98,
  author    = {de Alfaro, Luca and Henzinger, Thomas A. and Kupferman, Orna},
  title     = {Concurrent reachability games},
  booktitle = {Proceedings of the 39th Annual Symposium on Foundations of Computer Science (FOCS)},
  pages     = {564--575},
  publisher = {IEEE Computer Society},
  year      = {1998},
}

@article{AO19,
  author  = {Attia, Luc and Oliu-Barton, Miquel},
  title   = {A formula for the value of a stochastic game},
  journal = {Proceedings of the National Academy of Sciences},
  volume  = {116},
  number  = {52},
  pages   = {26435--26443},
  year    = {2019},
}

@article{BK76,
  author  = {Bewley, Truman and Kohlberg, Elon},
  title   = {The asymptotic theory of stochastic games},
  journal = {Mathematics of Operations Research},
  volume  = {1},
  pages   = {197--208},
  year    = {1976},
}

@article{BF68,
  author  = {Blackwell, David and Ferguson, Thomas S.},
  title   = {The Big Match},
  journal = {Annals of Mathematical Statistics},
  volume  = {39},
  pages   = {159--163},
  year    = {1968},
}

@incollection{Eve57,
  author    = {Everett, Hugh},
  title     = {Recursive games},
  booktitle = {Contributions to the Theory of Games, III},
  editor    = {Dresher, Melvin and Tucker, Albert W. and Wolfe, Philip},
  series    = {Annals of Mathematics Studies},
  volume    = {39},
  pages     = {47--78},
  publisher = {Princeton University Press},
  year      = {1957},
}

@incollection{Gil57,
  author    = {Gillette, Dean},
  title     = {Stochastic games with zero stop probabilities},
  booktitle = {Contributions to the Theory of Games, III},
  editor    = {Dresher, Melvin and Tucker, Albert W. and Wolfe, Philip},
  series    = {Annals of Mathematics Studies},
  volume    = {39},
  pages     = {179--187},
  publisher = {Princeton University Press},
  year      = {1957},
}

@article{Koh74,
  author = {Elon Kohlberg},
  journal = {The Annals of Statistics},
  number = {4},
  pages = {724--738},
  publisher = {Institute of Mathematical Statistics},
  title = {Repeated Games with Absorbing States},
  volume = {2},
  year = {1974}
}

@article{MN81,
  author  = {Mertens, Jean-Fran\c{c}ois and Neyman, Abraham},
  title   = {Stochastic games},
  journal = {International Journal of Game Theory},
  volume  = {10},
  pages   = {53--66},
  year    = {1981},
}

@article{OB21,
  author  = {Oliu-Barton, Miquel},
  title   = {New algorithms for solving zero-sum stochastic games},
  journal = {Mathematics of Operations Research},
  volume  = {46},
  number  = {1},
  pages   = {255--267},
  year    = {2021},
}

@misc{OB23arxiv,
  author = {Oliu-Barton, Miquel},
  title  = {Absorbing games with irrational values},
  note   = {Preprint, arXiv:2307.03570},
  year   = {2023},
}

@article{OV23,
  author       = {Miquel Oliu{-}Barton and
                  Guillaume Vigeral},
  title        = {Absorbing games with irrational values},
  journal = {Operations Research Letters},
  volume       = {51},
  number       = {6},
  pages        = {555--559},
  year         = {2023}
}

@article{Sha53,
  author  = {Shapley, Lloyd S.},
  title   = {Stochastic games},
  journal = {Proceedings of the National Academy of Sciences of the United States of America},
  volume  = {39},
  pages   = {1095--1100},
  year    = {1953},
}

@article{vN28,
  author  = {John von Neumann},
  title   = {Zur Theorie der Gesellschaftsspiele},
  journal = {Mathematische Annalen},
  volume  = {100},
  pages   = {295--320},
  year    = {1928},
}

\end{document}